\begin{document}
\title{On the Brauer group of affine diagonal quadrics}
\author{\textsc{Tetsuya Uematsu}}
\date{}
\maketitle
\begin{abstract}
In a previous work, we introduced the notion of uniform generators of the
 Brauer group and proved that general diagonal cubic surfaces do not
 have such generators. 

In this paper, we prove that a similar non-existence result holds for affine diagonal quadrics.
\end{abstract}
\section{Introduction}\label{S-i}
\footnote[0]{Date: \today.}
\footnote[0]{%
2010 Mathematics Subject Classification. 
Primary: 14F22. 
Secondary: 11R34, 14J26, 19F15. 
}
Let $X$ be a variety over a field $k$ of characteristic zero. Explicit description of elements
of the Brauer group $\Br(X)$ of $X$ has been studied by many
authors. Historically, the Brauer group of $k$ was first considered and it is
described for example by norm residue symbols and cyclic algebras. In 1970's,
Manin~\cite{manin1986cubic} considered this problem for diagonal cubic surfaces $X$,
that is, projective surfaces defined by a homogeneous equation of the
form $ax^3+by^3+cz^3+dt^3=0$. Under the assumption that $k$ contains a
primitive cubic root $\zeta$ of unity, let $\{\cdot,\cdot\}_3$ be the
follwoing norm residue symbol 
\[
 \{\cdot,\cdot\}_3\colon K_2^M(k(X)) \to H^2(k(X),\mu_3^{\otimes 2}) \cong
 H^3(k(X),\mu_3) \inj \Br(k(X)),
\] 
where $k(X)$ is the function field of $X$. For surfaces $X$ of the form $x^3+y^3+z^3+dt^3=0$, he proved that the following two elements 
\[
 e_1=\left\{d,\dfrac{x+\zeta y}{x+y}\right\}_3, \quad e_2=\left\{d, \dfrac{x+z}{x+y}\right\}_3
\]
are naturally considered as elements in $\Br(X)$ and generates
$\Br(X)/\Br(k) \cong (\bZ/3\bZ)^2$. Some generalizations of this result can
be found in for example, \cite{saito2009zerocycle}, Proposition 4.2.6
and \cite{uematsu14:_brauer}.

We do stress that the above generators are algebraically ``uniform'' in
the following sense. If we put 
\begin{equation*}
e_1(D)=\left\{D,
 \dfrac{x+\zeta y}{x+y}\right\}_3, \quad e_2(D)=\left\{D,\dfrac{x+z}{x+y}\right\}_3
\end{equation*}
where $D$ is considered as an {\it indeterminate} and if we want
 symbolic generators of $\Br(X_d)/\Br(k)$, where $X_d$ is the surface
 defined by $x^3+y^3+z^3+dt^3=0$ with $d \in k^{\ast}$, we can get them
 by specializing $e_1(D)$ and $e_2(D)$ at $D=d$. The similar result
 holds for the family $X_{c,d}: x^3+y^3+cz^3+dt^3=0$ with
 $\Br(X_{c,d})/\Br(k) \cong \bZ/3\bZ$~\cite{uematsu14:_brauer}.  In
 general, it is not necessarily that for a given family of varieties,
 generators of the Brauer group of each variety in the family is given
 by specializing ``uniform'' elements algebraically expressed by
 parameters of this family. In fact, for the Brauer group of general
 diagonal cubic surfaces $x^3+by^3+cz^3+dt^3=0 $ parametrized by three
 coefficients $b,c,d$, we have no such uniform generators. For details,
 see \cite{uematsu14:_brauer}. We remark that any cubic surface in the
 first two families has rational points, though the existence of
 rational points of general diagonal cubic surfaces heavily depends on
 coefficients $b,c,d$. Based on these observations, it seems that the
 non-existence of such uniform generators relates the complexity of a given
 family in some extent.

For this problem, Timothy D.~Browning asked the author whether such a uniform
generator does not exist in the case of {\it affine diagonal
quadrics}. The main objective of this article is to answer his question: 
as is the case of diagonal cubic surfaces, we have no uniform generator
in this case, too.  To state the claim more precisely, we prepare some
notations. Let $F=k(B,C,D)$ be the function field over $k$ with three
variables, $U$ the affine diagonal quadric over $F$ defined by
$x^2+By^2+Cz^2+D=0$. For $P=(b,c,d) \in k^{\ast}\times k^{\ast} \times
k^{\ast}$, let $U_P$ be the affine quadric over $k$ defined by
$x^2+by^2+cz^2+d=0$. For $e$ in $\Br(U)$, we will define its
specialization $\sp(e;P)$ in $\Br(U_P)/\Br(k)$ in Section 3.2. Define
the following domain of specialization:
\[
 \cP_k :=\{P \in k^{\ast} \times k^{\ast} \times k^{\ast} \mid
 \Br(U_P)/\Br(k) \cong \bZ/2\bZ\}.
\]
First we will prove the following
\begin{thm}\label{thm1}
 $\Br(U)/\Br(F)=0$.
\end{thm}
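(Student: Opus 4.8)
The plan is to pass to a smooth projective compactification of $U$, feed the resulting geometric input into the Hochschild--Serre spectral sequence, and thereby reduce the statement to the non-triviality of a single differential. Set up the geometry as follows. Let $Q\subset\mathbb{P}^3_F$ be the smooth quadric surface $x^2+By^2+Cz^2+Dw^2=0$, so that $U=Q\setminus C$ with $C=Q\cap\{w=0\}$ the smooth conic $\{x^2+By^2+Cz^2=0\}\subset\mathbb{P}^2_F$. Write $\overline F$ for an algebraic closure of $F$, $\Gamma=\operatorname{Gal}(\overline F/F)$, and $U_{\overline F},Q_{\overline F},C_{\overline F}$ for the base changes. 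Over $\overline F$ one has $Q_{\overline F}\cong\mathbb{P}^1\times\mathbb{P}^1$, and $C_{\overline F}$ is a smooth curve of bidegree $(1,1)$, hence a section of one of the two ruling projections $\rho\colon Q_{\overline F}\to\mathbb{P}^1$; consequently $\rho$ restricts to an $\mathbb{A}^1$-bundle $U_{\overline F}\to\mathbb{P}^1_{\overline F}$ (a torsor under a line bundle). Since the Brauer group, the units, and $H^3(-,\mathbb{G}_m)$ are invariant under such affine bundles over regular bases, and $\Br(\mathbb{P}^1_{\overline F})=0=H^3(\mathbb{P}^1_{\overline F},\mathbb{G}_m)$, this gives
\[
\Br(U_{\overline F})=0,\qquad H^3(U_{\overline F},\mathbb{G}_m)=0,\qquad \overline F[U_{\overline F}]^{\times}=\overline F^{\times},
\]
together with $\operatorname{Pic}(U_{\overline F})=\operatorname{Pic}(Q_{\overline F})/\mathbb{Z}[C_{\overline F}]\cong\mathbb{Z}$, generated by the class $\ell$ of a line of the chosen ruling.

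Next, determine the $\Gamma$-module structure and run the spectral sequence. The discriminant of $\langle1,B,C,D\rangle$ is $BCD$, which is not a square in $F=k(B,C,D)$; hence the two rulings of $Q_{\overline F}$ are interchanged by the nontrivial element $\sigma$ of $\operatorname{Gal}(F(\sqrt{BCD})/F)$, which therefore acts by $-1$ on $\operatorname{Pic}(U_{\overline F})\cong\mathbb{Z}$. Thus $\operatorname{Pic}(U_{\overline F})\cong\mathbb{Z}(\chi)$, $\chi$ being the quadratic character of $F(\sqrt{BCD})/F$, and one computes directly that $\operatorname{Pic}(U_{\overline F})^{\Gamma}=0$ and $H^1(\Gamma,\operatorname{Pic}(U_{\overline F}))\cong\mathbb{Z}/2\mathbb{Z}$. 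Plugging the vanishing statements above into the Hochschild--Serre spectral sequence $H^p(\Gamma,H^q(U_{\overline F},\mathbb{G}_m))\Rightarrow H^{p+q}(U,\mathbb{G}_m)$ — whose $E_2^{p,q}$ vanish for $q\ge2$ — and using $\operatorname{Pic}(U_{\overline F})^{\Gamma}=0$ to see that $\Br(F)\to\Br(U)$ is injective, the low-degree exact sequence yields
\[
\Br(U)/\Br(F)\;\cong\;\ker\!\Big(d_2\colon H^1(\Gamma,\operatorname{Pic}(U_{\overline F}))=\mathbb{Z}/2\mathbb{Z}\ \longrightarrow\ H^3(\Gamma,\overline F^{\times})=H^3(F,\mathbb{G}_m)\Big).
\]
So Theorem~\ref{thm1} is equivalent to the assertion that $d_2$ does not annihilate the generator of $H^1(\Gamma,\operatorname{Pic}(U_{\overline F}))$.

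I find it clarifying to reformulate this reduction geometrically. Since $\operatorname{Pic}(Q_{\overline F})$ is the permutation $\Gamma$-module $\mathbb{Z}[\Gamma/\operatorname{Gal}(\overline F/F(\sqrt{BCD}))]$, Shapiro's lemma gives $H^1(\Gamma,\operatorname{Pic}(Q_{\overline F}))=0$, so the same spectral sequence applied to $Q$ yields $\Br(Q)/\Br(F)=0$. The Brauer-group purity sequence $0\to\Br(Q)\to\Br(U)\xrightarrow{\partial_C}H^1(F(C),\mathbb{Q}/\mathbb{Z})$ then shows that the theorem is equivalent to the vanishing of the residue $\partial_C$ along the conic at infinity. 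A short argument with the Gersten complex of $Q$ — every residue of a class $\alpha\in\Br(U)$ along a curve on $Q$ other than $C$ vanishes, so its iterated residue at each closed point of $C$ vanishes — combined with the fact that $C_{\overline F}\cong\mathbb{P}^1$ is simply connected, shows that $\partial_C(\alpha)$ is necessarily a \emph{constant} character, i.e.\ lies in the image of $H^1(F,\mathbb{Q}/\mathbb{Z})\hookrightarrow H^1(F(C),\mathbb{Q}/\mathbb{Z})$. What remains — the hard part — is to prove that this constant residue class vanishes, equivalently that $d_2$ is nonzero.

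I expect this last step to be the only genuinely computational part of the proof. The way I would carry it out is to compute $d_2$ explicitly: represent the generator of $H^1(\Gamma,\operatorname{Pic}(U_{\overline F}))$ by the cocycle $\sigma\mapsto\ell$, lift $\ell$ to a divisor on $U_{\overline F}$, and lift the ensuing principal $2$-cocycle to rational functions; this produces a $3$-cocycle in $\overline F^{\times}$ governed by the $\Gamma$-action on a rational function $h$ on $Q$ with $\operatorname{div}_Q(h)=\ell+\ell^{\sigma}-C$ (a sum of two conjugate lines minus the conic at infinity), whose class one identifies with an explicit cyclic-algebra class in $H^3(F,\mathbb{G}_m)$. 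The proof then finishes by showing this class is nonzero over the rational function field $F=k(B,C,D)$, which one checks by iterated residues reducing to a non-triviality statement over $k$. Conceptually the non-vanishing is forced by the anisotropy of the generic quaternary diagonal form $\langle1,B,C,D\rangle$ over $k(B,C,D)$, equivalently by the absence of $F$-rational points on $U$: were $U$ to have an $F$-point, evaluation at it would split $\Br(F)\to\Br(U)$, $d_2$ would vanish, and $\Br(U)/\Br(F)$ would be $\mathbb{Z}/2\mathbb{Z}$. Thus everything up to the explicit computation of $d_2$ is formal, and that computation — showing the a priori surviving $\mathbb{Z}/2\mathbb{Z}$ is not realised — is the main obstacle.
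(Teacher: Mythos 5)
Your reduction is correct and is exactly the one the paper uses: after computing $\bar F[\bar U]^{\ast}=\bar F^{\ast}$, $\Br(\bar U)=0$ and $\Pic(\bar U)\cong\bZ$ with the Galois action factoring through the quadratic character of $F(\sqrt{BCD})/F$ (this is Proposition~\ref{prop:H012} and Proposition~\ref{prop:h1Pic}, where $H^1(F,\Pic(\bar U))\cong\bZ/2\bZ$), the theorem becomes the assertion that the differential $d^{1,1}\colon H^1(F,\Pic(\bar U))\to H^3(F,\bar F^{\ast})$ is nonzero on the generator. But that non-vanishing \emph{is} the theorem, and your proposal does not prove it: you state a plan (lift the cocycle $\sigma\mapsto\ell$ through divisors and rational functions to a $3$-cocycle in $\bar F^{\times}$, then detect it by iterated residues over $k(B,C,D)$) and explicitly defer its execution as ``the only genuinely computational part.'' That deferred part occupies the entirety of Steps 1--4 of the paper's proof: an explicit $3$-cocycle $\Phi$ with values in $\{\mu^{\pm1},1\}$ is computed (Lemma~\ref{L:step1}), it is corrected by a coboundary so as to come from $H^3(F''/F,\mu_2)$ after adjoining $\sqrt{\mu}$ (Lemma~\ref{h3mu2}), and non-vanishing is then established by a residue along $\{\mu=0\}$ followed by a second residue along $\{\lambda=0\}$, landing in a visibly nonzero class in $H^1(k(D'),\bZ/2\bZ)$. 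None of this is routine bookkeeping that can be taken on faith: one must choose the right divisors to take residues along, and the intermediate descent to $\mu_2$-coefficients is needed to make the residues computable at the cocycle level. As written, your text is an accurate outline of the proof, not a proof.

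A second caution: your closing remark that the non-vanishing is ``conceptually forced'' by the anisotropy of $\langle 1,B,C,D\rangle$ (equivalently $U(F)=\emptyset$) must not be mistaken for an argument. A rational point forces $d^{1,1}=0$, but the converse is false in general, so the absence of $F$-points on the generic quadric proves nothing about $d^{1,1}$. Likewise your purity reformulation on $Q$ (that the theorem amounts to the vanishing of the residue along the conic at infinity, which is automatically a constant character) is a correct and pleasant repackaging, but it halts at exactly the same unproved statement. The gap, then, is concrete: the explicit cocycle and iterated-residue computation that certifies $d^{1,1}\neq 0$ is missing, and it is the mathematical substance of Theorem~\ref{thm1}.
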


As a corollary of this result, we have the following non-existence
result:
\begin{cor}\label{cor1}
 Assume that $k$ is not $2$-closed. Then there does not exist a pair of an
 element $e \in \Br(U)$ and a dense open subset $W \subset (\bG_{m,k})^3$ 
 satisfying the following conditions:
\begin{itemize}
 \item $\sp(e;\cdot)$ is defined on $W(k) \cap \cP_k$; 
 \item for all $P \in W(k) \cap \cP_k$, $\sp(e;P)$ is a generator of $\Br(U_P)/\Br(k)$. 
\end{itemize}
\end{cor}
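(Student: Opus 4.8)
The plan is to argue by contradiction, assuming a pair $(e,W)$ as in the statement exists. First I would apply Theorem~\ref{thm1}: since $\Br(F)\to\Br(U)$ is surjective, we may write $e=\pi^{\ast}e_0$ for some $e_0\in\Br(F)$, where $\pi\colon U\to\operatorname{Spec}F$ is the structure morphism. By purity on the regular scheme $(\bG_{m,k})^3$, the class $e_0$ extends to an Azumaya class over some dense open $V_0\subset(\bG_{m,k})^3$, so that for $P\in V_0(k)$ one has a well-defined specialized class $e_0(P)\in\Br(k)$.

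The crucial — but soft — point is then that the specialization construction of Section~3.2 is compatible with pullback from the base: for every $P\in V_0(k)\cap W(k)\cap\cP_k$ (where $\sp(e;\cdot)$ is defined by hypothesis) one gets $\sp(e;P)=\bigl[\pi_P^{\ast}\bigl(e_0(P)\bigr)\bigr]=0$ in $\Br(U_P)/\Br(k)$, since $\pi_P^{\ast}(e_0(P))$ lies in the image of $\Br(k)$. Thus, once I know that $\cP_k$ is Zariski-dense in $(\bG_{m,k})^3$, the dense family meets the dense open $V_0\cap W$ at some $k$-rational point $P_0\in V_0(k)\cap W(k)\cap\cP_k$, for which $\sp(e;P_0)=0$; but $P_0\in\cP_k$ means $\Br(U_{P_0})/\Br(k)\cong\bZ/2\bZ$, so $\sp(e;P_0)$ is not a generator — contradicting the second hypothesis on $(e,W)$.

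It therefore remains to prove that $\cP_k$ is Zariski-dense in $(\bG_{m,k})^3$. Recall that for $P=(b,c,d)\in(k^{\ast})^3$ the affine quadric $U_P$ has, after base change to $\overline k$, vanishing Brauer group and geometric Picard group $\cong\bZ$, on which the Galois group acts through $\operatorname{Gal}\bigl(k(\sqrt{bcd})/k\bigr)$ by $\pm1$ (the projective closure is a smooth quadric surface, from which one removes the conic at infinity); hence, whenever $U_P$ carries a $k$-rational point, $\Br(U_P)/\Br(k)\cong H^1\bigl(k,\operatorname{Pic}(\overline{U_P})\bigr)$, which is $\bZ/2\bZ$ precisely when $bcd\notin(k^{\ast})^2$. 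Now, since $k$ is not $2$-closed, fix $a\in k^{\ast}\setminus(k^{\ast})^2$, and for $(b,t,q)\in(k^{\ast})^3$ set $P_{b,t,q}:=(b,\,-at^2,\,-bq^2)$. The quadric $U_{P_{b,t,q}}\colon x^2+by^2-at^2z^2-bq^2=0$ has the rational point $(0,q,0)$, and the product of its coefficients is $b\cdot(-at^2)\cdot(-bq^2)=a(btq)^2\notin(k^{\ast})^2$; hence $P_{b,t,q}\in\cP_k$ for all $(b,t,q)\in(k^{\ast})^3$. Finally, since $b$, $-at^2$, $-bq^2$ satisfy no nontrivial multiplicative relation, the induced $k$-algebra map $k[b^{\pm1},c^{\pm1},d^{\pm1}]\to k[b^{\pm1},t^{\pm1},q^{\pm1}]$ is injective, so (as $k$ is infinite) no nonzero polynomial vanishes on $\{P_{b,t,q}\}$; therefore this family, and a fortiori $\cP_k$, is Zariski-dense.

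The main obstacle is this last step. The naive family $(as^2,t^2,u^2)$ — which manifestly has non-square product of coefficients — need not lie in $\cP_k$, because over a general field the condition $bcd\notin(k^{\ast})^2$ is not by itself sufficient for $P\in\cP_k$; indeed Theorem~\ref{thm1} says exactly that it fails over $F$, where $BCD$ is a non-square. One must therefore exhibit points that simultaneously have non-square product of coefficients \emph{and} carry a $k$-rational point (which forces the residual obstruction, a differential into $H^3(k,\bG_m)$, to vanish), all the while keeping three independent parameters so as to retain Zariski density; the family $P_{b,t,q}$ above does this. Everything else — the reduction via Theorem~\ref{thm1} and the compatibility of $\sp$ with pullback from the base — is formal.
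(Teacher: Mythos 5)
Your argument is correct, and the core reduction is exactly the one in the paper: descend $e$ to $e'\in\Br(F)$ via Theorem \ref{thm1}, spread $e'$ out to a class over a dense open $S\subset\bA^3_k$, use Zariski density of $\cP_k$ to find $P\in (S\cap W)(k)\cap\cP_k$, and observe that $\sp(e;P)$ lies in $\pi_P^{\ast}\Br(k)$, hence vanishes in $\Br(U_P)/\Br(k)\cong\bZ/2\bZ$ --- a contradiction. Where you genuinely diverge is in the density of $\cP_k$ (the implication (3)$\Rightarrow$(1) of Proposition \ref{not-2-closed}). The paper takes the diagonal family $(vs^2,vt^2,vu^2)$ with $v$ a fixed non-square and asserts that every such $U_P$ has a $k$-point; you instead use $(b,-at^2,-bq^2)$, which carries the visible point $(0,q,0)$, still has non-square coefficient product $a(btq)^2$, and whose dominance you check by injectivity of the induced map on Laurent monomials (a clean substitute for the paper's appeal to Lemma 5.9 of \cite{uematsu14:_brauer}). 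Your variant is arguably the more robust one: for the paper's family the existence of a $k$-point is not evident (e.g.\ $x^2+2y^2+2z^2+2=0$ over $\bQ$ has no real points), and without a rational point one must separately kill the differential into $H^3(k,\bG_m)$ before concluding $\Br(U_P)/\Br(k)\cong H^1(k,\Pic(\overline{U_P}))$ --- exactly the pitfall you flag. So your construction buys a verifiable membership $P\in\cP_k$ with three free parameters at essentially no extra cost, whereas the paper's choice is shorter but rests on a solvability claim that, over a general field, would need further justification.
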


We remark that the non-$2$-closedness of $k$ assures the Zariski
density of the image of $\cP_k$ in $(\bG_{m,k})^3$.  

We also note that there exists a method of producing an explicit symbolic generator
of affine quadrics {\it one by one}. For example, in the
paper~\cite{colliot-thelene09:_brauer_manin}, Section 5.8,
J.-L.~Colliot-Th\'el\`ene and F.~Xu gave such a construction under the
assumption that there exists a rational point on it. 
Our theorem asserts that such a construction cannot be done {\it simultaneously} for the
family of affine diagonal quadrics in an algebraic way.
 
This paper is organized as follows. In Section 2, we compute some
invariants of affine diagonal quadrics, in particular, their Picard
groups and their Galois structures. In the last of this section, we will
find an explicit generating cocycle of $H^1(k,\Pic(\bar{U}))$, which
plays an essential role in Section 3. In Subsection 3.1, we prove
Theorem \ref{thm1} by explicit calculations. Almost all arguments are
quite similar to arguments in \cite{uematsu14:_brauer}. In Subsection
3.2, we briefly recall the definition of specialization of Brauer groups,
introduced in \cite{uematsu14:_brauer}. Finally we prove Corollary
\ref{cor1}, the nonexistence of uniform generators.
 
{\bf Notation.} 
In this article, all fields are assumed to be of characteristic $0$. In
particular, they are infinite fields. 
For a field $k$, we fix its algebraic closure $\bar{k}$. we denote $\tilde{k}$ by
 $\bigcup_{n>0} k(\zeta_n)$, where $\zeta_n$ is a primitive $n$-th root
 of unity. If $k$ is a discrete valuation field, $k^{\ur}$ denotes the
 maximal unramified extension of $k$.
If $k$ contains $\zeta_n$, we denote $\{\cdot,\cdot\}_n$ by the
following composite of maps:
\[
 K^M_2(k)/n \to H^2(k,\mu_n^{\otimes 2}) \cong H^2(k,\mu_n) \inj \Br(k),
\] 
where the first one is a norm residue symbol of $k$, the second one is
induced by $\zeta_n^i \otimes \zeta_n^j \mapsto \zeta_n^{ij}$. We often
omit the subscripts $n$ unless confusing.

For a group $M$ and an endomorphism $f$ of $M$, the symbol $\lsub{f}{M}$
means the kernel of $f$. 

The Brauer group $\Br(X)$ of a scheme $X$ means the \'etale cohomology
group $H_{\et}^2(X,\bG_m)$. If $\pi\colon X \to \Spec k$ is a
$k$-scheme, we denote $\Gamma(X,\cO_X)$, $X \times_{k} \bar{k}$ and
$\Br(X)/\pi^{\ast}\Br(k)$ by $k[X]$, $\bar{X}$ and $\Br(X)/\Br(k)$
respectively.  

\section{Affine diagonal quadrics}\label{S-pdcs}
In this section, we are concerned with some invariants of affine
diagonal quadrics.
Let $U$ be the affine surface over $k$ defined by an equation
\begin{equation*}
 ax^2+by^2+cz^2+d=0,
\end{equation*}
where $a$, $b$, $c$ and $d$ are in $k^{\ast}$. Let $\pi\colon U\to \Spec k$
denote the structure morphism. Now we put
\begin{equation*}
 \lambda=-\dfrac{b}{a},\quad \mu=-\dfrac{c}{a}\quad \text{and} \quad \nu=\dfrac{ad}{bc},
\end{equation*}
and then we can write as the equation of $U$ 
\begin{equation*}
 x^2-\lambda y^2-\mu z^2+\lambda\mu\nu =0.
\end{equation*}

Put $\alpha=\sqrt{\lambda}$, $\gamma=\sqrt{\nu}$, $\alpha'=\sqrt{\mu}$
and $\beta=\alpha\gamma$. We also define $k'$ and $k''$ as
$k(\alpha,\gamma)$ and $k'(\alpha')$.

Let $X$ be a natural compactification of $U$, that is,
\[
 X = \Proj k[x,y,z,t]/(ax^2+by^2+cz^2+dt^2) \subset \P^3_k.
\]

Put $i\colon Z:=X\setminus U \inj X$. Define the following four lines on $\bar{X}$:
\begin{align*}
 L_1  &\colon x+\alpha y= z+\beta t=0, \\
 L_2  &\colon x+\alpha y= z-\beta t=0, \\
 L_1' &\colon x-\alpha y= z-\beta t=0, \\
 L_2' &\colon x-\alpha y= z+\beta t=0. 
\end{align*}
By abuse of notation, we write the restriction of $L_i$ and $L_i'$ on
$\bar{U}$ as the same symbols. 

First we compute the structure of $\bar{k}[U]^{\ast}, \Pic(\bar{U})$ and
$\Br(\bar{U})$. 
\begin{prop}\label{prop:H012} 
We have the following.\\
(1) $\bar{k}[\bar{U}]^{\ast}=k^{\ast}$.\\
(2) $\Pic(\bar{U})=\bZ[L_1]$.\\
(3) $\Br(\bar{U})=0$.
\end{prop}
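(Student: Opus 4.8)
We need to compute three things: units, Picard group, and Brauer group of $\bar U$. The key geometric input is that $\bar X$ is a smooth quadric surface over $\bar k$, hence isomorphic to $\P^1 \times \P^1$, with $\Pic(\bar X) = \Z \times \Z$ generated by the two rulings, and the four lines $L_1, L_2, L_1', L_2'$ are the members of these two rulings through... actually, on a smooth quadric the two families of lines (rulings) each form a $\P^1$-worth of lines, but the four lines named here are the lines lying over the two points of $Z = \bar X \setminus \bar U$. So $Z$ is a conic (the intersection of $\bar X$ with $\{t=0\}$), which for a smooth diagonal quadric is a smooth plane conic, isomorphic over $\bar k$ to $\P^1$, and ... wait, but four lines suggests $Z$ is degenerate or that we're looking at the four lines meeting $Z$ in a special way. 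Let me reconsider.

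Actually I'd proceed as follows. First, $\bar X \cong \P^1_{\bar k} \times \P^1_{\bar k}$, so $\Br(\bar X) = 0$ and $\Pic(\bar X) = \Z\ell_1 \oplus \Z\ell_2$ where $\ell_1, \ell_2$ are the two rulings; the four lines $L_i, L_i'$ are lines in these rulings, and one checks from the defining equations that, say, $[L_1] = [L_1'] = \ell_1$ and $[L_2] = [L_2'] = \ell_2$ in $\Pic(\bar X)$, or some such assignment (two of them in one ruling, two in the other). The complement $Z = \bar X \cap \{t=0\}$ is a hyperplane section, so $[Z] = \ell_1 + \ell_2$, and in fact $Z$ is a (possibly degenerate) conic; for the diagonal quadric $Z$ is the pair of lines or a smooth conic depending on whether $ax^2+by^2+cz^2$ is degenerate — over $\bar k$ it is a smooth conic since $a,b,c \neq 0$, hence $Z \cong \P^1_{\bar k}$ meeting each ruling in one point; then the four named lines are the two lines of ruling $\ell_1$ and the two of ruling $\ell_2$ that are ``distinguished'' by the coordinate description, but on $\bar U = \bar X \setminus Z$ their classes become related. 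Hmm, let me just structure the argument abstractly.

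The clean route: use the excision/localization sequence for $\bar U = \bar X \setminus \bar Z$. For units and Picard, I would invoke the exact sequence
\[
0 \to \bar k[\bar X]^\ast \to \bar k[\bar U]^\ast \to \bigoplus_{C \subset \bar Z} \Z \xrightarrow{\operatorname{div}} \Pic(\bar X) \to \Pic(\bar U) \to 0,
\]
where the sum is over irreducible components $C$ of $\bar Z$. Since $\bar X$ is proper and connected, $\bar k[\bar X]^\ast = \bar k^\ast = k^\ast$ (the last equality since $k$ is algebraically closed in... no — $\bar k^\ast$ is just $\bar k^\ast$; but the claim (1) says $\bar k[\bar U]^\ast = k^\ast$, wait that must be a typo for $\bar k^\ast$, or the notation $\bar k[\bar U]$ already builds in the base change so we want $=\bar k^\ast$; I'll write $\bar k^\ast$). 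The point for (1) is that the divisor-of-a-function map restricted to the components of $\bar Z$ is injective: if $\bar Z$ is irreducible (smooth conic case over $\bar k$), its single class $\ell_1 + \ell_2$ is nonzero in $\Pic(\bar X) = \Z^2$, so no nonconstant unit appears and $\bar k[\bar U]^\ast = \bar k^\ast$; if $\bar Z$ is reducible it's a union of two lines from the two rulings, classes $\ell_1, \ell_2$, again linearly independent, so again injective. Either way (1) follows. For (2): $\Pic(\bar U) = \Pic(\bar X)/\langle[\bar Z\text{-components}]\rangle$. In the irreducible case this is $\Z^2/(\ell_1+\ell_2) \cong \Z$, generated by the image of $\ell_1 = [L_1]$ (since $[L_1] \equiv -[L_2] \equiv \ell_1$... one checks $[L_1] = \ell_1$ generates the quotient). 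In the reducible case the quotient is $\Z^2/(\ell_1, \ell_2) = 0$ — but claim (2) asserts $\Z[L_1] \cong \Z$, so actually $\bar Z$ must be irreducible, which indeed it is: over $\bar k$, $ax^2 + by^2 + cz^2 = 0$ with $a,b,c \in \bar k^\ast$ is a smooth conic. So $\bar Z \cong \P^1_{\bar k}$, one component, and $\Pic(\bar U) = \Z^2/(\ell_1 + \ell_2) = \Z[L_1]$. The one computational point to verify is the claim $[L_1] = \ell_1$ (i.e. $L_1$ lies in one of the rulings and its class together with $[\bar Z]$ generates $\Z^2$), which one does by intersecting $L_1$ with the other named lines using the explicit equations — this I would state as a lemma and relegate the coordinate check.

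For (3), $\Br(\bar U) = 0$: I would use that $\bar X \cong \P^1 \times \P^1$ has $\Br(\bar X) = 0$ and the purity/Gysin exact sequence for the open immersion $j\colon \bar U \hookrightarrow \bar X$ with closed complement $\bar Z$ of codimension $1$ (smooth, since $\bar Z$ is a smooth conic):
\[
0 = \Br(\bar X) \to \Br(\bar U) \to H^1(\bar Z, \Q/\Z) \to H^3_{\bar Z}(\bar X, \G_m) \to \cdots
\]
and $H^1(\bar Z, \Q/\Z) = H^1(\P^1_{\bar k}, \Q/\Z) = 0$ since $\P^1_{\bar k}$ is simply connected (geometrically). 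Hence $\Br(\bar U) = 0$. Alternatively, and perhaps more in the spirit of the paper, $\bar U$ is a smooth affine surface that is rational (a quadric minus a conic is rational), and one can cite that for such a surface over an algebraically closed field $\Br(\bar U)$ injects into $\Br(\bar k(\bar U))$ with controlled ramification; but the purity argument is cleanest. \emph{The main obstacle} I anticipate is not any of these cohomological steps but rather the bookkeeping in identifying the classes of the four lines $L_1, L_2, L_1', L_2'$ in $\Pic(\bar X) = \Z^2$ and pinning down that $[\bar Z] = [L_1] + [L_2]$ (equivalently $[L_1] + [L_1'] - \ldots$), i.e. the precise linear relations among these divisor classes on $\bar X$ — this is where the explicit diagonal form and the factorizations $x^2 - \lambda y^2 = (x+\alpha y)(x - \alpha y)$, $\mu z^2 - \lambda\mu\nu t^2 = \mu(z + \beta t)(z-\beta t)$ (using $\beta^2 = \lambda\nu$, $\mu \cdot \lambda \nu = $ the right thing) get used, and one must be careful that these are genuinely the rulings. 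So concretely: first establish $\bar X \cong \P^1 \times \P^1$ and the structure of $\bar Z$; second, the line-class lemma via explicit equations; third, the localization sequence for (1) and (2); fourth, purity for (3).
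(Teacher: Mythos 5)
Your proposal is correct and follows essentially the same route as the paper: the localization/purity exact sequence of Grothendieck for the pair $(\bar{X},\bar{Z})$, the identification $\bar{X}\cong\P^1_{\bar{k}}\times\P^1_{\bar{k}}$ via the factorization of the diagonal form, the relation $[\bar{Z}]=[L_1]+[L_2]$ in $\Pic(\bar{X})$, and the vanishing of $H^1(\bar{Z},\bQ/\bZ)$ because $\bar{Z}$ is a smooth conic isomorphic to $\P^1_{\bar{k}}$. The bookkeeping you flag (which rulings the four lines lie in, and that $\bar{Z}$ is irreducible) resolves exactly as you anticipate, so no gap remains.
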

\begin{proof}
We note that the pair $(\bar{X},\bar{Z})$ satisfies the purity of Brauer groups in
 the sense of \cite{grothendieck1968brauer3}, Section 6. Thus using the Leray spectral sequence
 $H^p(\bar{Z},R^qi^{!}\bG_m) \Rightarrow H^{p+q}_{\bar{Z}}(\bar{X},\bG_m)$ and the
 localization sequence, we have
 the following exact sequence (see \cite{grothendieck1968brauer3},
 Corollarie 6.2.):
\begin{align*}
 0 &\to \bar{k}^{\ast} \to \bar{k}[U]^{\ast} \to \bZ
   \to \Pic(\bar{X}) \to \Pic(\bar{U}) \to 0 
   \to \Br(\bar{X}) \to \Br(\bar{U}) \to H^1(\bar{Z}, \bQ/\bZ).
\end{align*}
In the above sequence the map $\bZ \to \Pic(\bar{X})$ maps $1$ to the
 class $[\bar{Z}]$. Since the defining equation of $\bar{X}$ can be written as
\[
 (x+\alpha y)(x-\alpha y)=(\alpha'z+\alpha'\beta
 t)(\alpha'z-\alpha'\beta t),
\]
we see that $\bar{X} \cong \P^1_{\bar{k}} \times
 \P^1_{\bar{k}}$ and 
\[
 \Pic(\bar{X}) = \bZ[L_1] \oplus \bZ[L_2].
\]
We can easily see that the class $[\bar{Z}]$ is equal to $[L_1]+[L_2]$
 in $\Pic(\bar{X})$. Therefore we have $\Pic(\bar{U})=\bZ[L_1]$ and the
 relation $[L_1]+[L_2]=0$ in $\Pic(\bar{U})$. This proves (2).

By this description, we have the injectivity of $\bZ \to \Pic(\bar{X})$,
 which implies $\bar{k}^{\ast} \cong \bar{k}[\bar{U}]^{\ast}$. This proves
 (1).

Finally, we see that $H^1(\bar{Z}, \bQ/\bZ)=0$ by the fact $\bar{Z}$ is
 isomorphic to $\P^1_{\bar{k}}$. This proves $\Br(\bar{X}) \cong
 \Br(\bar{U})$. Moreover, by the birational invariance of the Brauer
 group of proper varieties and the triviality of the Brauer group of
 projective spaces, we obtain $\Br(\bar{X})=0$. Therefore we complete the
 proof of (3) and Proposition \ref{prop:H012}.
\end{proof} 

Next we compute the Galois cohomology $H^1(k,\Pic(\bar{U}))$. 
\begin{prop}\label{prop:h1Pic}
\[
 H^1(k,\Pic(\bar{U})) \cong 
 \begin{cases}
  0 & \quad \text{if $\nu \in (k^{\ast})^2,$} \\
  \bZ/2\bZ & \quad \text{otherwise.}
 \end{cases}
\]
\end{prop}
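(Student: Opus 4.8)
The plan is to make the $G$-module structure of $\Pic(\bar U)$ completely explicit, where $G=\mathrm{Gal}(\bar k/k)$, and then read off the cohomology. By Proposition \ref{prop:H012} the group $\Pic(\bar U)=\bZ[L_1]$ is infinite cyclic, and from its proof we have, inside $\Pic(\bar X)=\bZ[L_1]\oplus\bZ[L_2]$, the relation $[\bar Z]=[L_1]+[L_2]$, so that $[L_1]=-[L_2]$ in $\Pic(\bar U)$. Thus everything reduces to describing how $G$ permutes the four lines and computing an $H^1$ of a rank-one module.

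First I would locate $L_1'$ and $L_2'$ in $\Pic(\bar X)$. Since $\bar X$ is the smooth quadric $\P^1_{\bar k}\times\P^1_{\bar k}$ (via the factorisation used in the proof of Proposition \ref{prop:H012}), a line on it has class $[L_1]$ or $[L_2]$, and two distinct lines have the same class precisely when they are disjoint. A direct inspection of the defining equations of $L_1,L_1',L_2,L_2'$ gives $L_1\cap L_1'=\emptyset$ while $L_1\cap L_2$ and $L_1\cap L_2'$ each consist of a single point; hence $[L_1']=[L_1]$ and $[L_2']=[L_2]$ in $\Pic(\bar X)$, and therefore also in $\Pic(\bar U)$, where all four classes are $\pm[L_1]$ with $[L_1']=[L_1]=-[L_2']=-[L_2]$.

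Next I would work out the $G$-action on $\{L_1,L_2,L_1',L_2'\}$. For $\sigma\in G$ one has $\sigma(\alpha)=\pm\alpha$ (as $\alpha^2=\lambda\in k$) and $\sigma(\beta)=\pm\beta$ (as $\beta^2=\lambda\nu\in k$). Running through the four sign patterns one finds that $\sigma$ carries each of $\{L_1,L_1'\}$ and $\{L_2,L_2'\}$ to itself when $\sigma(\beta)/\sigma(\alpha)=\beta/\alpha$, and interchanges the two pairs otherwise. Since $\beta/\alpha=\gamma=\sqrt{\nu}$, the decisive condition is simply whether $\sigma$ fixes $\sqrt{\nu}$. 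Combined with the previous paragraph, the induced action on $\Pic(\bar U)=\bZ[L_1]\cong\bZ$ is trivial when $\nu\in(k^{\ast})^2$, and when $\nu\notin(k^{\ast})^2$ it factors through $\mathrm{Gal}(k(\sqrt{\nu})/k)=\{1,\tau\}$ with $\tau$ acting by $[L_1]\mapsto[L_2]=-[L_1]$, i.e.\ by $-1$; in other words $\Pic(\bar U)$ is $\bZ$ twisted by the quadratic character cutting out $k(\sqrt{\nu})/k$.

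For the cohomology: if $\nu\in(k^{\ast})^2$ then $\Pic(\bar U)$ is the trivial module $\bZ$ and $H^1(k,\bZ)=\mathrm{Hom}_{\mathrm{cont}}(G,\bZ)=0$. If $\nu\notin(k^{\ast})^2$, put $N=k(\sqrt{\nu})$ and $H=\mathrm{Gal}(\bar k/N)$; since $H$ acts trivially and $H^1(H,\bZ)=0$, inflation--restriction gives $H^1(k,\Pic(\bar U))\cong H^1(\mathrm{Gal}(N/k),\bZ^-)$, where $\bZ^-$ denotes $\bZ$ with the nontrivial element acting by $-1$. For the cyclic group of order $2$ this equals $\ker(1+\tau)/(1-\tau)\bZ^-=\bZ/2\bZ$, as $1+\tau$ acts as $0$ and $1-\tau$ as multiplication by $2$; this gives the proposition. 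The step I expect to be the main obstacle is the middle one: bookkeeping the permutation of the four lines and checking that the only relevant Galois invariant is $\sqrt{\nu}$ — neither $\sqrt{\lambda}$ nor $\sqrt{\mu}$ matters, since changing the sign of $\alpha$ alone merely moves $L_1$ to $L_1'$, which has the same class, and $\alpha'=\sqrt{\mu}$ does not occur in the equations of the lines at all. Everything after that is routine.
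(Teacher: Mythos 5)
Your argument is correct, and it bottoms out in the same computation as the paper's — $H^1$ of a cyclic group of order two acting on $\bZ$ by $-1$, i.e.\ $\ker(1+\tau)/(1-\tau)\bZ=\bZ/2\bZ$ — but it organizes the reduction differently. The paper applies Hochschild--Serre twice to replace $H^1(k,\Pic(\bar U))$ by $H^1(k'/k,\Pic(U_{k'}))$ with $k'=k(\alpha,\gamma)$, and then runs through five cases according to which of $\alpha,\gamma$ already lie in $k$; the key fact that changing the sign of $\alpha$ alone does not move $[L_1]$ is established there by exhibiting the principal divisor $\divi\bigl((z-\beta)/(x+\alpha y)\bigr)=t\cdot L_1-L_1$. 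You instead pin down the $G_k$-module structure in one stroke: since $L_1$ and $L_1'$ are disjoint lines on the smooth quadric $\bar X\cong\P^1_{\bar k}\times\P^1_{\bar k}$ they lie in the same ruling, so $[L_1']=[L_1]$ and $[L_2']=[L_2]$, and the sign bookkeeping over $\sigma(\alpha)=\pm\alpha$, $\sigma(\beta)=\pm\beta$ shows the action on $\bZ[L_1]$ factors through the quadratic character of $k(\sqrt{\nu})/k$; a single inflation--restriction then finishes both cases at once. Your intersection-theoretic substitute for the explicit principal divisor is sound, and your conclusions match the paper's case-by-case findings (e.g.\ case (iii) there records $s\cdot[L_1]=[L_2']=-[L_1]$, consistent with your $[L_2']=[L_2]$). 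What your route buys is the elimination of the case analysis and a cleaner conceptual statement ($\Pic(\bar U)$ is $\bZ$ twisted by $\chi_\nu$); what the paper's route buys is the explicit rational functions and the concrete cocycle of Corollary \ref{cor:h1Pic}, which are exactly what get reused in the cocycle manipulations of Section 3, so you would still need to extract that explicit generator separately.
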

\begin{proof}
Using the Hochschild-Serre spectral sequence
\[
 E^{p,q}_2=H^p(k',H^q(\bar{U},\bG_m)) \Rightarrow H^{p+q}(U_{k'},\bG_m)
\]
and the facts $U_{k'}(k') \neq \emptyset$ and $\bar{k}[\bar{U}]^{\ast}=\bar{k}^{\ast}$, we have 
\[
 \Pic(U_{k'}) \stackrel{\cong}{\to} \Pic(\bar{U})^{G_{k'}} \cong \bZ.
\]
Moreover, using the Hochschild-Serre spectral sequence
\[
 E^{p,q}_2=H^p(k'/k,H^q(k',\Pic(\bar{U}))) \Rightarrow H^{p+q}(k,\Pic(\bar{U}))
\]
and the triviality of $\Pic(\bar{U}) \cong \bZ$ as a $G_{k'}$-module, we have
\[
 H^1(k'/k,\Pic(U_{k'})) \cong H^1(k,\Pic(\bar{U})).
\]
Hence it suffices to compute the left hand side
 $H^1(k'/k,\Pic(U_{k'}))$. 

(i) In the case when $[k':k]=4$, put $s$ (resp. $t$) to be the generator of $\Gal(k'/k(\alpha))$
 (resp. $\Gal(k'/k(\gamma))$). Then we have $s\gamma=-\gamma$ and $t\alpha=-\alpha$.

Since $s\cdot [L_1] =[L_2]=-[L_1]$ in $\Pic U_{k'}$ we have
 $(\Pic(U_{k'}))^{k'/k(\alpha)}=0$. Using this fact, the spectral sequence
\[
 E^{p,q}_2=H^p(k(\alpha)/k,H^q(k'/k(\alpha),\Pic(U_{k'}))) \Rightarrow H^{p+q}(k'/k,\Pic(U_k'))
\]
induces the isomorphism $H^1(k'/k,\Pic(U_{k'})) \cong
 H^1(k'/k(\alpha),\Pic(U_{k'}))^{\Gal(k(\alpha)/k)}$. 

Now we compute $H^1(k'/k(\alpha),\Pic(U_{k'}))$. Using Tate cohomology,
 we have 
\[
 H^1(k'/k(\alpha),\Pic(U_{k'})) \cong
 \hat{H}^{-1}(k'/k(\alpha),\Pic(U_{k'})) \cong \lsub{N_s}{\Pic(U_{k'})}/I_s\Pic(U_{k'}),
\]
where for $\Gal(k'/k(\alpha))$-module $M$, $N_s\colon M \to M$ maps $m
 \to m+s\cdot m$ and $I_s\colon M \to M$ maps $m$ to $m-s\cdot m$. 

For $M=\Pic(U_{k'})$, we see that
\[
 \lsub{N_s}{\Pic(U_{k'})} = \Pic(U_{k'}), \quad I_s\Pic(U_{k'})
 =2\Pic(U_{k'}).
\]
Therefore we obtain
\[
 H^1(k'/k(\alpha),\Pic(U_{k'})) \cong \bZ/2\bZ\cdot[L_1].
\]
Since we have $\divi\left(\dfrac{z-\beta}{x+\alpha y}\right)=t\cdot L_1
 -L_1$ in $\Div(U_{k'})$, $[L_1] \in \Pic(U_{k'})$ is
 $t$-invariant. Thus we have
\[
 H^1(k'/k(\alpha),\Pic(U_{k'}))^{\Gal(k(\alpha)/k)} \cong \bZ/2\bZ.
\]

(ii) In the case when $k'=k(\gamma)$ and $\alpha \in k^{\ast}$, put $s$
 to be the generator of $\Gal(k(\gamma)/k)$. Then we have
 $s\cdot[L_1]=[L_2]=-[L_1]$ and therefore $H^1(k'/k,\Pic(U_{k'})) \cong
 \bZ/2\bZ\cdot [L_1]$. 

(iii) In the case when $k'=k(\alpha)=k(\gamma)$, put $s$ to be the
 generator of $\Gal(k(\gamma)/k)$. Then we have
 $s\cdot[L_1]=[L_2']=-[L_1]$ and therefore $H^1(k'/k,\Pic(U_{k'})) \cong
 \bZ/2\bZ\cdot[L_1]$. 

(iv) In the case when $k'=k(\alpha)$ and $\gamma \in k^{\ast}$, put $t$ to be the
 generator of $\Gal(k(\alpha)/k)$. Then we see that
\[
 H^1(k'/k, \Pic(U_{k'})) \cong
 \hat{H}^{-1}(k(\alpha)/k,\Pic(U_{k(\alpha)})) \cong 
 \lsub{N_t}{\Pic(U_{k(\alpha)})}/I_t\Pic(U_{k(\alpha)}) =0
\]
since $[L_1]$ is $t$-invariant. 

(v) In the case when $k'=k$, we have immediately
 $H^1(k'/k,\Pic(U_{k'}))=0$. 

Note that the condition $\nu \in (k^{\ast})^2$ is equivalent to (iv)
 $k'=k(\alpha)$ and $\gamma \in k^{\ast}$ or (v) $k'=k$, we complete the
 proof of Proposition \ref{prop:h1Pic}.
\end{proof}
By chasing the isomorphisms above, we can find an explicit cocycle of
$H^1(k'/k,\Pic(U_k'))$ corresponding to $-[L_1] \in \Pic(U_{k'})$:
\begin{cor}\label{cor:h1Pic}
Assume that $[k':k]=4$. If we define a 1-cocycle $\phi\colon \Gal(k'/k) \to \Pic(U_{k'})$ to be
\[
 \phi(t^i)=0, \quad \phi(st^i)=[L_1] \quad (i=0,1),
\]
then the class of $\phi$ in $H^1(k'/k,\Pic(U_{k'})) \cong \bZ/2\bZ$
 generates the whole group.
\end{cor}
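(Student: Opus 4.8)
The plan is to verify directly that $\phi$ is a well-defined $1$-cocycle and that its class is nonzero; since $H^1(k'/k,\Pic(U_{k'}))\cong\bZ/2\bZ$ by Proposition~\ref{prop:h1Pic}, any nonzero class generates the whole group, so this is enough. I would use throughout that $\Pic(U_{k'})=\bZ[L_1]$, that $s$ and $st$ act on it by multiplication by $-1$, and that $t$ acts trivially: the $t$-invariance of $[L_1]$ is the identity $\divi\bigl((z-\beta)/(x+\alpha y)\bigr)=t\cdot L_1-L_1$ recorded in the proof of Proposition~\ref{prop:h1Pic}, and $s\cdot[L_1]=[L_2]=-[L_1]$ follows from the relation $[L_1]+[L_2]=0$ in $\Pic(\bar U)$ from Proposition~\ref{prop:H012}(2). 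Consequently $st\cdot[L_1]=s\cdot[L_1]=-[L_1]$.

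First I would check the cocycle identity $\phi(gh)=\phi(g)+g\cdot\phi(h)$ for all $g,h$ in $\Gal(k'/k)=\{1,s,t,st\}\cong(\bZ/2\bZ)^2$. The identities involving $1$ are immediate from $\phi(1)=0$, those involving only $t$ from $\phi(t)=0$, and the remaining ones reduce to the computation $[L_1]+s\cdot[L_1]=[L_1]-[L_1]=0$ (and the same with $st$ in place of $s$); for instance $\phi(s\cdot s)=\phi(1)=0=\phi(s)+s\cdot\phi(s)$, $\phi\bigl((st)^2\bigr)=\phi(1)=0=\phi(st)+st\cdot\phi(st)$, and $\phi(s\cdot t)=\phi(st)=[L_1]=\phi(s)+s\cdot\phi(t)$. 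This is a short finite table-check and is the only routine part.

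Next, to see $[\phi]\neq 0$, observe that any $1$-coboundary has the form $g\mapsto g\cdot m-m$ for some $m=n[L_1]\in\Pic(U_{k'})$, and such a map sends $s$ to $s\cdot m-m=-2n[L_1]\in 2\bZ[L_1]$. Since $\phi(s)=[L_1]\notin 2\bZ[L_1]$, the cocycle $\phi$ is not a coboundary, hence its class is the nonzero element of $\bZ/2\bZ$ and therefore generates $H^1(k'/k,\Pic(U_{k'}))$.

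This also matches the chain of isomorphisms used in the proof of Proposition~\ref{prop:h1Pic}: under $H^1(k'/k(\alpha),\Pic(U_{k'}))\cong\hat H^{-1}(\langle s\rangle,\Pic(U_{k'}))\cong\lsub{N_s}{\Pic(U_{k'})}/I_s\Pic(U_{k'})$ the generator $-[L_1]$ corresponds to the $\langle s\rangle$-cocycle $1\mapsto 0$, $s\mapsto[L_1]$, and $\phi$ is precisely a $\Gal(k'/k)$-cocycle restricting to it (here the restriction $H^1(k'/k,\Pic(U_{k'}))\to H^1(k'/k(\alpha),\Pic(U_{k'}))$ is injective because $\Pic(U_{k'})^{\langle s\rangle}=0$ kills the inflation term of the five-term sequence). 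There is essentially no obstacle in this argument; the one point that needs care is the Galois action on $[L_1]$, namely that $t$, not $s$, is the automorphism fixing it — interchanging their roles would break the cocycle identity.
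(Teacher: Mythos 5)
Your proof is correct and complete. The paper gives no explicit argument for this corollary (it only says the cocycle is obtained ``by chasing the isomorphisms above'' in Proposition~\ref{prop:h1Pic}); your direct check of the cocycle identity and of the non-coboundary property, using $s\cdot[L_1]=[L_2]=-[L_1]$ and the $t$-invariance of $[L_1]$, carries out exactly the computation the paper leaves implicit, and your closing paragraph correctly reconciles it with the paper's inflation--restriction and Tate-cohomology identifications.
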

\section{Non-existence of uniform generators}
\subsection{A vanishing theorem}
The main result of this paper is:
\begin{thm}\label{T-vb}
 Let $k$ be a field, $F=k(b,c,d)$ be the function field over $k$ with
 three variables $b,c,d$ and $U$ be the affine diagonal quadric
 $x^2+by^2+cz^2+d=0$ over $F$.
 Then
\begin{equation*} 
\Br(U)/\Br(F)=0.
\end{equation*} 
\end{thm}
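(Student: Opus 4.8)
The plan is to compute $\Br(U)/\Br(F)$ via the exact sequence coming from the Hochschild--Serre spectral sequence for the covering $\bar U/U$. Since $\bar k[\bar U]^\ast = \bar k^\ast$ and $\Br(\bar U)=0$ by Proposition~\ref{prop:H012}, the low-degree terms give an exact sequence
\[
 0 \to H^1(F,\Pic(\bar U)) \to \Br(U)/\Br(F) \to H^3(F,\bG_m),
\]
and since $F$ has cohomological dimension features that need care, one actually uses that the algebraic part of $\Br(U)/\Br(F)$ injects into, and here in fact equals (because $U(F)\neq\emptyset$, the point giving a splitting $\Br(F)\to\Br(U)$ and killing the transcendental $H^3$ contribution as in \cite{uematsu14:_brauer}), $H^1(F,\Pic(\bar U))$. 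So the whole problem reduces to showing $H^1(F,\Pic(\bar U))=0$ over the \emph{specific} field $F=k(b,c,d)$.

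Next I would invoke Proposition~\ref{prop:h1Pic}, which tells us that $H^1(F,\Pic(\bar U))$ is either $0$ or $\bZ/2\bZ$, the former exactly when $\nu\in (F^\ast)^2$. Here, with $a=1$, we have $\nu = ad/(bc) = d/(bc)$. The heart of the argument is therefore the elementary but crucial observation that $d/(bc)$ \emph{is} a square in $F=k(b,c,d)$: indeed, performing the change of variables $d\mapsto bc\, d$ (an automorphism of the rational function field $k(b,c,d)$ fixing $b,c$), or more simply noting that $k(b,c,d) = k(b,c)(d)$ and $d/(bc)$ is a uniformizer-type element whose square class we can adjust, we see $d = bc\cdot(d/(bc))$ and we may as well rename the independent transcendental $d' = d/(bc)$; but we must instead argue directly for the given presentation. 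The clean statement is: in $k(b,c,d)$, the element $\nu=d/(bc)$ is a square if and only if... it is not obviously a square, so I expect the real argument runs the other way.

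Reconsidering: I would \emph{not} expect $\nu$ to be a square in $F$, so Proposition~\ref{prop:h1Pic} would give $H^1(F,\Pic(\bar U))\cong\bZ/2\bZ$, contradicting the theorem, unless the transcendental contribution from $H^3(F,\bG_m)$ is genuinely used. Thus the correct strategy must be: the class in $H^1(F,\Pic(\bar U))\cong\bZ/2\bZ$ produced by Corollary~\ref{cor:h1Pic} (the cocycle $\phi$ with $\phi(st^i)=[L_1]$) does \emph{not} lift to an element of $\Br(U)$ — equivalently, its image under the differential $d_2\colon H^1(F,\Pic(\bar U))\to H^3(F,\bG_m)$ is nonzero. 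So the key steps are: (1) set up the five-term / Hochschild--Serre machinery following \cite{uematsu14:_brauer} to identify $\Br(U)/\Br(F)$ with $\ker\bigl(d_2\colon H^1(F,\Pic(\bar U))\to H^3(F,\bG_m)\bigr)$; (2) using the explicit cocycle $\phi$ from Corollary~\ref{cor:h1Pic} and an explicit model of $\Pic(\bar U)$ inside $\Div(\bar U)$, compute $d_2(\phi)$ as an explicit cup product / cyclic-algebra-type class in $H^3(k(B,C,D),\mu_2)$ expressed via norm residue symbols in the variables $B,C,D$; (3) show this class is nonzero by specializing $B,C,D$ or by a residue computation along one of the coordinate divisors of $\Spec k[B,C,D]$, exploiting that $B,C,D$ are algebraically independent so the relevant symbol $\{B,C\}$ or $\{-B,D\}$ has nontrivial residue.

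The main obstacle will be step (2)--(3): making the differential $d_2$ explicit on the cocycle $\phi$ and then certifying non-vanishing of the resulting degree-$3$ class over the rational function field $k(B,C,D)$. This is precisely the place where the argument parallels \cite{uematsu14:_brauer}; the computation of $d_2(\phi)$ requires tracking the extension $1\to\bar k(\bar U)^\ast \to \mathrm{Div}_0 \to \Pic(\bar U)\to 1$ and choosing explicit rational functions (such as $(z-\beta)/(x+\alpha y)$ already appearing in the proof of Corollary~\ref{cor:h1Pic}) to represent the divisor classes, after which a diagram chase yields a $2$-cocycle with values in $\bar k(\bar U)^\ast$ whose class in $H^3$ is pinned down. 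Non-vanishing should follow from a residue argument at a well-chosen discrete valuation of $F$ where exactly one of $B$, $C$, or $D$ acquires odd valuation, so the expected answer is that $\Br(U)/\Br(F)=0$ because the unique candidate class obstructs.
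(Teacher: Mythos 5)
Modulo the false starts (the low-degree exact sequence runs $0\to\Br(U)/\Br(F)\to H^1(F,\Pic(\bar U))\to H^3(F,\bar F^{\ast})$, not the reverse, and $U$ has no $F$-rational point, so the claimed splitting of $\Br(F)\to\Br(U)$ is spurious --- though also unnecessary), your final strategy is exactly the paper's: show that the generator of $H^1(F,\Pic(\bar U))\cong\bZ/2\bZ$ coming from the cocycle of Corollary~\ref{cor:h1Pic} has nonvanishing image under $d^{1,1}$ in $H^3(F,\bar F^{\ast})$, by making the differential explicit through the divisor sequences and then detecting the resulting class by iterated residues along the coordinate divisors $\{\mu=0\}$ and then $\{\lambda=0\}$, where the algebraic independence of the parameters is used. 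The substance of the paper's proof lies in its Steps 1--4, which carry out precisely the computation your sketch defers: the explicit $3$-cocycle $\Phi$, the reduction of its class to $\mu_2$-coefficients, and the two successive residue calculations.
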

\begin{proof}[Proof of Theorem \ref{T-vb}.]
By changes of coordinates, we may assume 
\[
 F=k(\lambda,\mu,\nu), \quad U\colon
 x^2-\lambda y^2-\mu z^2+\lambda\mu\nu=0
\]
We recall some notations. We define
\begin{equation*}
\alpha=\sqrt{\lambda}, \quad \gamma=\sqrt{\nu}, \quad
 \alpha'=\sqrt{\mu},\quad \beta=\alpha\gamma.
\end{equation*}
Moreover we put
\begin{equation*}
 F'=F(\alpha,\gamma),\quad F''=F'(\alpha)=F(\alpha,\gamma,\alpha').
\end{equation*}
We have the following exact sequence:
\begin{equation*}
 0 \to \Br(U)/\Br(F) \to H^1(F,\Pic(\bar{U})) \stackrel{d^{1.1}}{\to} H^3(F,\bar{F}^{\ast}).
\end{equation*}
Therefore, to prove the theorem, it suffices to show the image of a
 generator in $H^1(F,\Pic(\bar{U})) \cong \bZ/2\bZ$ does not vanish in $H^3(F,
 \bar{F}^{\ast})$. We divide its proof into 4 steps. \\
{\bf Step 1.} The goal of this step is to prove the following.
\begin{lem}\label{L:step1}
Let Let $\phi: \Gal(F'/F) \to \Pic(U_{F'})$ be the 1-cocycle
 appearing in Corollary \ref{cor:h1Pic}. Then its image under the map $d^{1,1}\colon
 H^1(F'/F,\Pic(U_{F'})) \to H^3(F'/F, (F')^{\ast})$ is
 represented by the 3-cocycle $\Phi$ defined to be the following equations:
\begin{alignat*}{3}
& \Phi(t^{j_1},t^{j_2},t^{j_3})=1 \\
& \Phi(1,s^{i_2}t^{j_2},s^{i_3}t^{j_3})=1,\quad 
&& \Phi(s^{i_1}t^{j_1},1,s^{i_3}t^{j_3})=1,\quad
&& \Phi(s^{i_1}t^{j_1},s^{i_2}t^{j_2},1)=1 \\
& \Phi(t,t,s)=\mu,
&& \Phi(t,s,s)=1,
&& \Phi(t,st,s)=\mu^{-1}, \\
& \Phi(s,t,s)=\mu,
&& \Phi(s,s,s)=1,
&& \Phi(s,st,s)=\mu^{-1}, \\
& \Phi(st,t,s)=1,
&& \Phi(st,s,s)=1,
&& \Phi(st,st,s)=1, 
\end{alignat*}
\[
 \Phi(s^{i_1}t^{j_1},s^{i_2}t^{j_2},s^{i_3}t^{j_3})
=\Phi(s^{i_1}t^{j_1},s^{i_2}t^{j_2},s^{i_3}),
\]
where the indices $i_{\ast}$ and $j_{\ast}$ take on any values in $\{0, 1\}$.
\end{lem}
\begin{proof}
Define $\cD$ to be the $\Gal(F'/F)$-orbit of $\bZ L_1$. In other words, 
\[
 \cD = \bZ L_1 \oplus \bZ L_2 \oplus \bZ L_1' \oplus \bZ L_2'.
\]
We also define $\cD_0$ to be the $\Gal(F'/F)$-submodule of $\cD$
 generated by the following three principal divisors:
\begin{align*}
 D_1 =\divi(f_1) =L_1+L_2, \quad &f_1:=x+\alpha y,\\
 D_2 =\divi(f_2) =L_1'+L_2', \quad &f_2:=x-\alpha y,\\
 D_3 =\divi(f_3) =L_1+L_2', \quad &f_3:=z+\beta.
\end{align*}

Then we can see that the sequences 
\begin{align*}
  &0 \to \cD_0 \to \cD \to \Pic(U_{F'}) \to 0,\\
  &1 \to (F')^{\ast} \to \divi^{-1}(\cD_0) \to \cD_0 \to 0
\end{align*}
are both exact. These induce the following connecting homomorphisms 
\begin{align*}
&\partial\colon H^1(F'/F,\Pic(V_{F'})) \to H^2(F'/F,\cD_0), \\
&\delta\colon H^2(F'/F,\cD_0) \to H^3(F'/F, F'^{\ast}).
\end{align*}
By the same argument as in \cite{kresch2008effectivity}, Proposition 6.1, (i), we have the
 equation $\delta\circ\partial=d^{1,1}$. Therefore the explicit
 computation of cocycles completes the proof of Lemma \ref{L:step1}.  
\end{proof}
\noindent
{\bf Step 2.} The goal of this step is the following: 
\begin{lem}\label{h3mu2}
 The image of $\Phi$ under the inflation $i^{F'}_{\bar{F}}\colon
 H^3(F'/F,(F')^{\ast}) \to H^3(F,\bar{F}^{\ast})$ comes from
 $H^3(F''/F,\mu_2).$ 
\end{lem}
\begin{proof}
 We have the exact sequence of $\Gal(F''/F)$-modules
\begin{equation*}
 1 \to \mu_2 \to F''^{\ast} \stackrel{2}{\to} (F''^{\ast})^2 \to 1,
\end{equation*}
and we get the following commutative diagram
\begin{equation*}
\xymatrix{
 & H^3(F'/F,F'^{\ast}) \ar^{i^{F'}_{F''}}[d] &  \\
 H^3(F''/F,\mu_2) \ar[d] \ar[r] & H^3(F''/F,F''^{\ast})
  \ar^{i^{F''}_{\bar{F}}}[d] \ar^{2}[r] & H^3(F''/F,(F''^{\ast})^2) \ar[d]\\
 H^3(F,\mu_2) \ar[r] & H^3(F,\bar{F}^{\ast}) \ar^{2}[r] &
  H^3(F,\bar{F}^{\ast}), \\
}
\end{equation*}
where $i_{F'}^{F''}$ and $i^{F''}_{\bar{F}}$ are inflations and each row
 is exact. Therefore to prove the claim, it suffices to
 show $i^{F'}_{F''}[\Phi]$ vanishes in
 $H^3(F''/F,(F''^{\ast})^2)$. Let $w$ be the generator of $\Gal(F''/F')$. The image of
 $i^{F'}_{F''}[\Phi]$ under $2\colon H^3(F''/F,F''^{\ast})
 \to H^3(F''/F,(F''^{\ast})^2)$ is the class of the following cocycle:
\begin{equation*}
 (s^{i_1}t^{j_1}w^{k_1},s^{i_2}t^{j_2}w^{k_2},s^{i_3}t^{j_3}w^{k_3})
  \mapsto \Phi(s^{i_1}t^{j_1},s^{i_2}t^{j_2},s^{i_3}t^{j_3})^2,
\end{equation*}
and what we have to prove is that this cocycle is in $B^3(F''/F,(F''^{\ast})^2)$.
Define $\psi \in C^2(F''/F,(F''^{\ast})^2)$ to be:
\begin{alignat*}{2}
&\psi(t^{j_1}w^{k_1},t^{j_2}w^{k_2})=1, &\quad 
&\psi(w^{k_1},st^{j_2}w^{k_2})=1, \\
&\psi(tw^{k_1},st^{j_2}w^{k_2})=\mu, &\quad 
&\psi(st^{j_1}w^{k_1},t^{j_2}w^{k_2})=1, \\
&\psi(sw^{k_1},st^{j_2}w^{k_2})=\mu, &\quad 
&\psi(stw^{k_1},st^{j_2}w^{k_2})=1, 
\end{alignat*}
where indices $i_{\ast}$, $j_{\ast}$ and $k_{\ast}$ take on any values in $\{0, 1\}$.
Then we can easily see $d\psi = (i^{F'}_{F''}\Phi)^2$ in
 $C^3(F''/F,(F''^{\ast})^2)$ and hence the class of $i^{F'}_{F''}\Phi$
 vanishes in $H^3(F''/F,(F''^{\ast})^2)$. This completes the proof of
 Proposition \ref{h3mu2}. 
\end{proof}
By using this cochain $\psi$, we can construct the class in
 $H^3(F''/F,\mu_2)$ whose image in $H^3(F,\bar{F}^{\ast})$ is
 $i^{F'}_{\bar{F}}\Phi$. We have the following diagram with
 exact rows
\begin{equation*}
\xymatrix{
0 \ar[r] & 
C^2(F''/F,\mu_2) \ar_{d^2}[d]\ar^{f^2}[r] & 
C^2(F''/F,F''^{\ast}) \ar_{d^2}[d]\ar^{g^2}[r] & 
C^2(F''/F,(F''^{\ast})^2) \ar_{d^2}[d]\ar[r] &
0 \\
0 \ar[r] & 
C^3(F''/F,\mu_2) \ar_{d^3}[d]\ar^{f^3}[r] & 
C^3(F''/F,F''^{\ast}) \ar_{d^3}[d]\ar^{g^3}[r] & 
C^3(F''/F,(F''^{\ast})^2) \ar_{d^3}[d]\ar[r] &
0 \\
0 \ar[r] & 
C^4(F''/F,\mu_2) \ar^{f^4}[r] & 
C^4(F''/F,F''^{\ast}) \ar^{g^4}[r] & 
C^4(F''/F,(F''^{\ast})^2) \ar[r] &
0. \\
}
\end{equation*}
If $\tilde{\psi} \in C^2(F''/F,F''^{\ast})$ is a lift of $\psi$, we see
 that there exists a cochain $\Phi' \in C^3(F''/F,\mu_2)$ such that
\begin{equation*}
 f^3\Phi'=i^{F'}_{F''}\Phi - d^2\tilde{\psi}.
\end{equation*}
Moreover, by construction, $\Phi'$ is a cocycle and
 $f^3[\Phi']=[i^{F'}_{F''}\Phi-d^2\tilde{\psi}]=i^{F'}_{F''}[\Phi]$. Therefore
 the class $[\Phi']$ is what we need.  As a lift $\tilde{\psi}$ of
 $\psi$, we can take the following natural cochain:
\begin{alignat*}{2}
&\tilde{\psi}(t^{j_1}w^{k_1},t^{j_2}w^{k_2})=1, &\quad 
&\tilde{\psi}(w^{k_1},st^{j_2}w^{k_2})=1, \\
&\tilde{\psi}(tw^{k_1},st^{j_2}w^{k_2})=\alpha', &\quad 
&\tilde{\psi}(st^{j_1}w^{k_1},t^{j_2}w^{k_2})=1, \\
&\tilde{\psi}(sw^{k_1},st^{j_2}w^{k_2})=\alpha', &\quad 
&\tilde{\psi}(stw^{k_1},st^{j_2}w^{k_2})=1.
\end{alignat*}
Hence we can write $\Phi'$ explicitly as follows:
\begin{equation*}
 (s^{i_1}t^{j_1}w^{k_1},s^{i_2}t^{j_2}w^{k_2},s^{i_3}t^{j_3}w^{k_3})
  \mapsto
  \dfrac{\tilde{\psi}(s^{i_2}t^{j_2}w^{k_2},s^{i_3}t^{j_3}w^{k_3})}{w^{k_1}\tilde{\psi}(s^{i_2}t^{j_2}w^{k_2},s^{i_3}t^{j_3}w^{k_3})}
  \in \mu_2.
\end{equation*} 
\noindent
{\bf Step 3.} 
In this step, we reduce the proof of the nontriviality of
 $i_{\bar{F}}^{F'}[\Phi]$ in $H^3(F,\bar{F}^{\ast})$ to that of the
 nontriviality of a class in some $H^2$ cohomology. For any prime divisor $D \subset \bA_k^3=\Spec
 k[\lambda,\mu,\nu]$, we have the following commutative diagram:
\begin{equation*}
\xymatrix{
H^3(F''/F,\mu_2) \ar^{i^{F''}_{\bar{F}}}[d]& \\
H^3(F,\mu_2) \ar[d] \ar^(0.45){\res_D}[r] & H^2(k(D),\bZ/2\bZ) \ar[d]\\
H^3(F,\bQ/\bZ(1)) \ar_{\cong}[d] \ar^(0.45){\res_D}[r] & H^2(k(D),\bQ/\bZ) \\
H^3(F,\bar{F}^{\ast}), &
}
\end{equation*}
where $F=k(\lambda,\mu,\nu)$ is considered as the function field of
 $\bA_k^3$, $k(D)$ is the function field of $D$, and $\res_D$ are residue
 maps associated to $D$.

Thus to prove the theorem, it suffices to show:
\begin{equation}\label{E-rH2}
\text{There exists } D \subset \bA^3_{k} \text{ such that }
 \res_D(i^{F''}_{\bar{F}}[\Phi']) \neq 0 \in H^2(k(D),\bQ/\bZ).
\end{equation} 

In the sequel, $D$ always denotes the divisor $\{\mu=0\} \subset
 \bA^3_k$. Let $\cO_D$ be the completion of the local ring
 $k[\lambda,\mu,\nu]_{(\mu)}$ at its maximal ideal and $F_D$ its
 fractional field. Note that $\mu$ is a uniformizer of $\cO_D$ and the
 residue field of $\cO_D$ is isomorphic to $k(D)=k(\lambda,\nu)$.
 
 Now we should recall the definition of $\res_D$:
 \begin{lem}[\cite{garibaldi2003cohomological}, III, Theorem 6.1.]\label{L-dr}
Let $K$ be a complete discrete valuation field of characteristic $0,$
  $\kappa$ its residue field$,$ $p$ its characteristic$,$ and $I \subset
  G_K$ its inertia$.$ Then for any $G_{\kappa}$-module $C$ without having
  non-zero $p$-torsion element$,$ we have the following exact sequence
\begin{equation*}
0 \to H^i(\kappa,C) \to H^i(K,C) \stackrel{r}{\to}
 H^{i-1}(\kappa,\Hom(I,C)) \to 0.
\end{equation*}
Here the second map is induced by the canonical map $G_K \to G_{\kappa},$
and $r$ is defined as follows$:$
\begin{quote}
For a normalized cocycle $\phi \in Z^i(K,C)$ satisfying
\begin{equation*}
\text{for all } i \geq 2,\quad g_i \equiv g_i' \mod I \Rightarrow
 \phi(g_1,g_2,\ldots,g_n)=\phi(g_1,g_2',\ldots, g_n'),
\end{equation*}
define $r\phi \in Z^{i-1}(\kappa,\Hom(I,C))$ as$:$
\begin{equation*}
\text{for all } h \in I,\quad
 (r\phi)(\bar{g_1},\ldots,\bar{g_{n-1}})(h)=\phi(h,g_1,\ldots,g_{n-1}),
\end{equation*}
where $g_i$ are lifts of $\bar{g_i}$ to $G_K.$
\end{quote}
\end{lem}
There is the canonical isomorphism
\begin{equation*}
\iota\colon \Hom(G_{F_D^{\ur}},\mu_2) =H^1(F_D^{\ur},\mu_2) \cong
 {F_D^{\ur}}^{\ast}/({F_D^{\ur}}^{\ast})^2 \cong \bZ/2\bZ,
\end{equation*}
where the middle isomorphism is induced by Kummer sequence and the right one
 is given by normalized valuation on $F_D^{\ur}$. Then $\res_D$ is given by
\begin{equation*}
H^3(F,\mu_2)\to H^3(F_D,\mu_2) \stackrel{r}{\to}
 H^2(k(D),\Hom(G_{F_D^{\ur}},\mu_2)) \stackrel{H^2(\iota)}{\to} H^2(k(D),\bZ/2\bZ).
\end{equation*}  
Now we describe the class $r[i^{F''}_{\bar{F}}\Phi'] \in
 H^2(k(D),\Hom(G_{F^{\ur}_D},\mu_2))$ explicitly. We introduce some
 field extensions. Let $k(D)'$, $F_D''$, $F_D'$ be the same notation as
 in \S \ref{S-pdcs}. Moreover, by abuse of notation, we denote the
 elements in $\Gal(F_D''/F_D)$ corresponding to $s$, $t$ and $w \in
 \Gal(F''/F)$ as the same symbols.  To make our situation clear, we give
 the following diagram of field extensions:
\begin{equation*}
\xymatrix{
& & F_D'' \ar@{.>}^(0.45){{\rm residue\ field}}[rr]
\ar@{-}^{\rm ramified}_{2}[d] & & k(D)' \ar@{=}[d] \\
F'' \ar[urr] \ar@{-}_{2}[d] & & F_D' \ar@{.>}[rr]
 \ar@{-}^{\rm unramified}_{4}[d] & &
 k(D)' \ar@{-}_{4}[d] \\
F' \ar[urr] \ar@{-}_{4}[d] & & F_D \ar@{.>}[rr] & & k(D) \\
F \ar_(0.4){{\rm completion}}[urr] & & & &\\
}
\end{equation*}

We have the following:
\begin{lem}\label{L-rrphi}
If we define the cochain
\begin{equation*}
\bar{r\Phi'} \in C^2(k(D)'/k(D),\Hom(\Gal(F_D''/F_D'),\mu_2))
\end{equation*}
as
\begin{equation*}
\bar{r\Phi'}(\bar{s}^{i_1}\bar{t}^{j_1},\bar{s}^{i_2}\bar{t}^{j_2})(w^k):=\Phi'(w^k,s^{i_1}t^{j_1},s^{i_2}t^{j_2}),
\end{equation*}
where $\bar{s}$ and $\bar{t}$ is the image of $s$ and $t$ under the
 natural map
\begin{equation*}
  \Gal(F_D''/F_D) \to \Gal(k(D)'/k(D)),
\end{equation*}
then $\bar{r\Phi'}$ is a cocycle and its image under the map
\begin{equation*}
  i^{k(D)'}_{\bar{k(D)}}\colon H^2(k(D)'/k(D),\Hom(\Gal(F_D''/F_D'),\mu_2)) \to
   H^2(k(D),\Hom(G_{F_D^{\ur}},\mu_2))
\end{equation*}
is $ri^{F''}_{\bar{F}}[\Phi'].$
\end{lem}
By using the following isomorphisms of $\Gal(k(D)'/k(D))$ and
 $G_{k(D)}$-modules
\begin{equation*}
\xymatrix{
\bZ/2\bZ \ar@{=}[d] \ar^(0.3){\cong}[r] & \Hom(\Gal(F_D''/F_D'),\mu_2) \ar[d]
\ar[r] &
\Hom(G_{F_D^{\ur}},\mu_2) \ar[d] \ar^(0.55){\cong}[r] & \bZ/2\bZ \ar[d] \\
\bZ/2\bZ \ar^(0.25){\cong}[r] & \Hom(\Gal(F_D''/F_D'),\bQ/\bZ(1)) \ar[r] &
\Hom(G_{F_D^{\ur}},\bQ/\bZ(1)) \ar^(0.65){\cong}[r] & \bQ/\bZ,
}
\end{equation*}
we have the following diagram
\begin{equation*}
\xymatrix{
H^2(k(D)'/k(D),\bZ/2\bZ) \ar@{=}[d] \ar[r] &
H^2(k(D),\bZ/2\bZ) \ar[d]\\
H^2(k(D)'/k(D),\bZ/2\bZ) \ar[r] &
H^2(k(D),\bQ/\bZ).
}
\end{equation*}
Put $E=\tilde{k(D)}$ and $E'=\tilde{k(D)'}$. Noting that $k(D)'=k(D)(\alpha,\gamma)$ and that $\alpha$ and
 $\gamma$ are transcendental over $k$, we have $k(D)' \cap E = k(D)$ and therefore
\begin{equation*}
 \Gal(E'/E) \stackrel{\cong}{\to} \Gal(k(D)'/k(D)).
\end{equation*}
We fix an isomorphism $\bQ/\bZ \cong \bQ/\bZ(1)$ as trivial
 $G_E$-modules. Then we have the following commutative diagram:
\begin{equation*}
\xymatrix{
H^2(k(D)'/k(D),\bZ/2\bZ) \ar_{\cong}[d] \ar[r] & H^2(k(D),\bQ/\bZ) \ar[d] \\
H^2(E'/E,\bZ/2\bZ) \ar_{\cong}[d] \ar[r] &
 H^2(E,\bQ/\bZ) \ar^{\cong}[d] \\
H^2(E'/E,\mu_2) \ar[d] \ar[r] &
H^2(E,\bQ/\bZ(1)) \ar^{\cong}[d] \\
H^2(E,\mu_2) \ar[r] & H^2(E,E^{\ast}).
}
\end{equation*}
Since the bottom map in the above diagram is injective by Hilbert's
 Theorem 90, in order to prove the claim \eqref{E-rH2}, it suffices to
 show that $[\bar{r\Phi'}] \in H^2(k(D)'/k(D),\bZ/2\bZ)$ is non-trivial in
 $H^2(E,\mu_2)$. We define the cocycle $\Psi \in
 Z^2(E'/E,\mu_2)$ as follows:
\begin{alignat*}{2}
&\Psi(t^{j_1}, t^{j_2})=1, &&\quad 
 \Psi(1,st^{j_2})=1, \\
&\Psi(t, st^{j_2})=-1, &&\quad 
 \Psi(st^{j_1},st^{j_2})=1, \\
&\Psi(s, st^{j_2})=-1, &&\quad 
 \Psi(st,st^{j_2})=1.
\end{alignat*}
We can easily see that $[\Psi]$ is the image of $[\bar{r\Phi'}] \in H^2(k(D)'/k(D),\bZ/2\bZ)$
under the isomorphism $H^2(k(D)'/k(D),\bZ/2\bZ) \stackrel{\cong}{\to}
 H^2(E'/E,\mu_2)$ in the above diagram. Hence we have to
 prove the nontriviality of $i^{E'}_{\bar{E}}[\Psi] \in
 H^2(E,\mu_2)$, where
\begin{equation*}
i^{E'}_{\bar{E}}\colon H^2(E'/E,\mu_2) \to H^2(E,\mu_2).
\end{equation*}
\noindent
{\bf Step 4.} In this step, we reduce the proof of the nontriviality of
 $i^{E'}_{\bar{E}}[\Psi]$ in $H^2(E,\mu_2)$ to that of the
 nontriviality of a class in some $H^1$ cohomology, and finish the proof
 of Theorem \ref{T-vb}. 
We consider its residue along to the divisor $D'=\{\lambda=0\} \subset \bA^2_{k}$. 
We fix notations. Let $\cO_{D'}$ be the completion of the local ring
$k[\lambda,\nu]_{(\lambda)}$ at its maximal ideal and $E_{D'}$ its fractional
field. Note that $\lambda$ is a uniformizer of $\cO_{D'}$ and the residue
field of $\cO_{D'}$ is isomorphic to $k(D')=k(\mu)$. Let $E_{D'}'$ be the same notation as in \S
\ref{S-pdcs}. By abuse of notation, we denote the elements in
$\Gal(E_{D'}'/E_{D'})$ corresponding to $s$ and $t \in \Gal(E'/E)$ as the
same symbols.  To make our situation clear, we give the following
diagram of field extensions:
\begin{equation*}
\xymatrix{
& & E_{D'}' \ar@{.>}^(0.45){{\rm residue\ field}}[rr]
\ar@{-}^{\rm ramified}_{2}[d] & & k(D')(\gamma) \ar@{=}[d] \\
E' \ar[urr] \ar@{-}_{2}[d] & & E_{D'}(\gamma) \ar@{.>}[rr]
 \ar@{-}^{\rm unramified}_{2}[d] & &
 k(D')(\gamma) \ar@{-}_{2}[d] \\
E(\gamma) \ar[urr] \ar@{-}_{2}[d] & & E_{D'} \ar@{.>}[rr] & & k(D'). \\
E \ar_(0.4){{\rm completion}}[urr] & & & &\\
} 
\end{equation*}
Now $\res_{D'}$ is given by
\begin{equation*}
H^2(E,\mu_2)\to H^2(E_{D'},\mu_2) \stackrel{r}{\to}
 H^1(k(D'),\Hom(G_{E_{D'}^{\ur}},\mu_2)) \stackrel{\cong}{\to} H^1(k(D'),\bZ/2\bZ).
\end{equation*}
We have a similar result to Lemma \ref{L-rrphi}:
\begin{lem}\label{L-cc}
If we define the cochain
\begin{equation*}
\bar{r\Psi} \in C^1(k(D')(\gamma)/k(D'),\Hom(\Gal(E_{D'}'/E_{D'}(\gamma)),\mu_2))
\end{equation*}
as
\begin{equation*}
\bar{r\Psi}(\bar{s}^{i})(t^j):=\Psi(t^j,s^i),
\end{equation*}
where $\bar{s}$ is the image of $s$ under the
 natural map $\Gal(E_{D'}'/E_{D'}) \to \Gal(k(D')(\gamma)/k(D'))$,
then $\bar{r\Psi}$ is a cocycle and its image under the map
\begin{equation*}
  i^{k(D')(\gamma)}_{\bar{k(D')}}\colon H^1(k(D')(\gamma)/k(D'),\Hom(\Gal(E_{D'}'/E_{D'}(\gamma)),\mu_2)) \to
   H^1(k(D'),\Hom(G_{E_{D'}^{\ur}},\mu_2))
\end{equation*}
is $ri^{E'}_{\bar{E}}[\Psi].$
\end{lem}
By the injectivity of $i_{\bar{k(D')}}^{k(D')(\gamma)}$ and $[\bar{r\Psi}] \neq
0$, we have the nontriviality of the image of $[\bar{r\Psi}]$ in
 $H^1(k(D'),\bZ/2\bZ)$. 
Therefore we complete the proof of Theorem \ref{T-vb}.
\end{proof}

\subsection{Specialization of Brauer groups}\label{S-sbg}
In \cite{uematsu14:_brauer}, we introduce the notion of uniform
generators. For example, the Brauer group of projective diagonal cubic surfaces of the form
$x^3+y^3+z^3+dt^3=0$ has uniform symbolic generators, that is, if we put 
\begin{equation*}
e_1(D)=\left\{D,\dfrac{x+\zeta y}{x+y}\right\}_3,\quad 
e_2(D)=\left\{D,\dfrac{x+z}{x+y}\right\}_3 
\end{equation*}
where $D$ is considered as an {\it indeterminate} and if we want
 symbolic generators of $\Br(X_{d})/\Br(k)$, where $X_{d}$ is the
 surface of the form $x^3+y^3+z^3+dt^3=0$, we can
 get them by specializing $e_1(D)$ and $e_2(D)$ at $D=d$.

However, the Brauer group of ``general'' diagonal cubic surfaces
$ax^3+by^3+cz^3+dt^3=0$ does not have such an uniform generator. 

Our main result of this article is that a similar non-existence result holds for
affine diagonal quadrics. 

To formulate this uniformity and claim the precise statement, we briefly recall the definition of the
specialization of the Brauer group. For detail, see
\cite{uematsu14:_brauer}, Section 2. Let $k$ be a field, $\cO_F$ a polynomial ring over $k$ with $r$
variables, $F$ its fractional field and $f_1, \ldots f_m$ polynomials in
$\cO_F[x_1,\ldots,x_n]$.  Let $\cX$ be the scheme over $\cO_F$ defined as:
\begin{equation*}
\cX = \Spec\left(\cO_F[x_1,\ldots,x_n]/(f_1,\ldots,f_m)\right)
 \stackrel{\pi}{\to} \Spec \cO_F=\bA_k^r.
\end{equation*}    
Let $\pi_F\colon X:=\cX_F \to \Spec F$ be the base change of $\pi$ to
$\Spec F$. Assume that $X$ is smooth over $F$.  Let $e \in \Br(X)$ be an
arbitrary element. Then there exists a non-empty affine open subscheme
$S$ of $\bA_k^r$ and $\tilde{e}
\in \Br(\cX\times_{\bA_k^r}S)$ satisfying that $\cX \times_{\bA_k^r}S$ is
smooth over $S$ and that
\begin{equation*}
 \res^S_{\Spec F}(\tilde{e}) =e,
\end{equation*}
where $\res^S_{\Spec F}\colon \Br(\cX\times_{\bA^r_k}S) \to \Br(X).$ 
For a given $P \in S(k)$, we have the following diagram:
\begin{equation*}
\xymatrix{
X_P \ar^{P}[r] \ar^{\pi_0}[d] \ar@{}|{\square}[dr] & \cX \times_{\bA^r_k}S
 \ar^{\pi_S}[d] \ar@{}|{\square}[dr] & X \ar[l] \ar^{\pi_F}[d] \\
\Spec k \ar^{P}[r] & S & \Spec F \ar[l], \\
}
\end{equation*}
where $X_P:=\cX\times_{\bA^r_k}\Spec k$.
Now we can define {\it the specialization of $e$ at $P$} as
\begin{equation*}
\sp(e;P):=P^{\ast}\tilde{e} \in \Br(X_P).
\end{equation*}
Note that this definition is independent of $S$. 

\subsection{Proof of the non-existence result}
Finally we state the non-existence of uniform generators of the Brauer group of
affine diagonal quadrics. Let $k, F, U$ be as in Theorem \ref{T-vb}. To eliminate some exceptional points, we
introduce the following domain of specialization:
\[
 \cP_k:=\{P \in (\bG_{m,k})^3(k) \mid \Br(U_P)/\Br(k) \cong \bZ/2\bZ.\}
\]
We would like to assume that the image of $\cP_k$ in $(\bG_{m,k})^3$ is
Zariski dense in $(\bG_{m,k})^3$. This assumption is characterized by the
non-$2$-closedness of $k$: 
\begin{prop}\label{not-2-closed}
Let $k$ be a field. Then the following
 conditions are equivalent$:$

{\rm(1)} $\cP_k$ is Zariski dense in $(\bG_{m,k})^3;$

{\rm(2)} $\cP_k$ is non-empty$;$

{\rm(3)} $k$ is not $2$-closed.
\end{prop}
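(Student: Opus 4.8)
The plan is to prove the cycle of implications $(1)\Rightarrow(2)\Rightarrow(3)\Rightarrow(1)$, of which $(1)\Rightarrow(2)$ is trivial since $(\bG_{m,k})^3$ is a nonempty variety over an infinite field. For $(2)\Rightarrow(3)$ I would argue by contraposition: if $k$ is $2$-closed (every element of $k^{\ast}$ is a square, equivalently $k$ has no quadratic extension), then for every $P=(b,c,d)$ all of $\lambda,\mu,\nu$ associated to $U_P$ lie in $(k^{\ast})^2$, so by Proposition \ref{prop:h1Pic} we get $H^1(k,\Pic(\bar U_P))=0$; combined with Proposition \ref{prop:H012}(3), $\Br(\bar U_P)=0$, the exact sequence $0\to\Br(U_P)/\Br(k)\to H^1(k,\Pic(\bar U_P))\to H^3(k,\bar k^{\ast})$ from \S3.1 forces $\Br(U_P)/\Br(k)=0$ for all $P$. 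Hence $\cP_k=\emptyset$, proving $(2)\Rightarrow(3)$.

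The substantive implication is $(3)\Rightarrow(1)$: assuming $k$ is not $2$-closed, I must exhibit enough points $P$ with $\Br(U_P)/\Br(k)\cong\bZ/2\bZ$ to fill a Zariski-dense subset of $(\bG_{m,k})^3$. By Proposition \ref{prop:h1Pic} the condition $\Br(U_P)/\Br(k)\cong\bZ/2\bZ$ will follow once I know (a) $\nu=a d/(bc)\notin(k^{\ast})^2$ for $P=(b,c,d)$, so that $H^1(k,\Pic(\bar U_P))\cong\bZ/2\bZ$, and (b) the differential $d^{1,1}\colon H^1(k,\Pic(\bar U_P))\to H^3(k,\bar k^{\ast})$ vanishes on this group. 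The idea is to specialize the computation of Theorem \ref{T-vb}: pick $\tau\in k^{\ast}\setminus(k^{\ast})^2$ (possible by (3)), and consider points with, say, $\mu=\mu_0$ a fixed square and $\lambda,\nu$ ranging so that $\nu\notin(k^{\ast})^2$ — for instance $\nu$ of the form $\tau\cdot(\text{square})$. When $\mu_0$ is a square, one is in cases (ii) or (iv) of Proposition \ref{prop:h1Pic}'s proof rather than case (i), and the cocycle $\Phi$ of Lemma \ref{L:step1}, whose entries are powers of $\mu$, becomes a coboundary after specialization since $\mu_0\in(k^{\ast})^2$; thus $d^{1,1}=0$ and the whole group $H^1\cong\bZ/2\bZ$ survives into $\Br(U_P)/\Br(k)$. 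This produces a subset of $\cP_k$ cut out by the non-vanishing of a nontrivial coordinate condition and the non-squareness of $\nu$, which I then argue is Zariski dense.

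The main obstacle is the density argument itself: the locus ``$\nu\notin(k^{\ast})^2$'' inside $(\bG_{m,k})^3$ is not Zariski open, so I cannot simply invoke that a nonempty open set over an infinite field has dense rational points. The fix is to use a multiplicative change of variables: the map $(\bG_{m,k})^3\to(\bG_{m,k})^3$, $(b,c,d)\mapsto(b,c,\tau bc d^{2})$ (or a similar monomial map realizing $\nu=\tau d'^2/(\text{units})$ for a free parameter $d'$) is dominant, and its image meets $\cP_k$ in the image of a Zariski-dense set of $k$-points; concretely, fixing $b,c$ arbitrarily and letting $d$ vary freely over $k^{\ast}$, the resulting $\nu$ runs through a full coset of $(k^{\ast})^2$, all non-square, so for every choice of $b,c$ in a dense set we get points of $\cP_k$, and these are Zariski dense because $k^{\ast}$ is infinite and the projection to the $(b,c)$-plane is dominant. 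I would also need to double-check that one can simultaneously keep $\lambda=-b/a,\mu=-c/a$ landing outside the bad cases where $d^{1,1}$ might be nonzero — but since we are free to choose $\mu$ to be a square (e.g. by a suitable choice of the overall normalization, or by restricting to a dense subfamily where $c/a\in(k^{\ast})^2$), and the vanishing of $d^{1,1}$ in that regime is exactly what the specialized form of Lemma \ref{L:step1} gives, this is a bookkeeping matter rather than a genuine difficulty.
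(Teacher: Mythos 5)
Your overall skeleton agrees with the paper's: the cycle $(1)\Rightarrow(2)\Rightarrow(3)\Rightarrow(1)$, with $(1)\Rightarrow(2)$ trivial and $(2)\Rightarrow(3)$ proved by contraposition exactly as in the text (if $k$ is $2$-closed every $U_P$ is isomorphic to $x^2+y^2+z^2+1=0$, so $H^1(k,\Pic(\bar U_P))=0$ and $\cP_k=\emptyset$). The divergence is in $(3)\Rightarrow(1)$. The paper takes $\cP=S\times S\times S$ with $S=v(k^{\ast})^2$ for a fixed non-square $v$, so that $\nu\notin(k^{\ast})^2$ gives $H^1(k,\Pic(\bar U_P))\cong\bZ/2\bZ$, argues that $U_P(k)\neq\emptyset$ so that a rational point splits $\pi$ and kills the differential $d^{1,1}$, and then invokes a density lemma for products of infinite subsets of $k^{\ast}$. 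You instead propose to kill $d^{1,1}$ by specializing the explicit cocycle $\Phi$ of Lemma \ref{L:step1} and noting that, its entries being powers of $\mu$, it bounds a cochain with entries $\sqrt{\mu}$ once $\mu$ is a square. That idea is sound in the regime where $\Phi$ is actually the relevant cocycle, but as written your argument has a gap.

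Two concrete problems. First, your claim that ``when $\mu_0$ is a square one is in cases (ii) or (iv) of Proposition \ref{prop:h1Pic}'' is false: the case division there is governed by $\lambda$ and $\nu$, i.e.\ by $k'=k(\sqrt{\lambda},\sqrt{\nu})$, and $\mu$ plays no role in it. Second, and more seriously, the formula for $\Phi$ in Lemma \ref{L:step1} is established only under the hypothesis $[k':k]=4$ (case (i)), so your coboundary argument applies only at points where $\sqrt{\lambda}$ and $\sqrt{\nu}$ generate a $(\bZ/2\bZ)^2$-extension. This cannot always be arranged together with $\nu\notin(k^{\ast})^2$: if $k^{\ast}/(k^{\ast})^2$ has order two (e.g.\ $k=\mathbb{R}$), every choice of non-square $\lambda$ lands you in case (iii), for which no cocycle has been computed; choosing $\lambda$ a square lands you in case (ii), same issue. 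So the ``bookkeeping'' you defer is a genuine missing step, not a formality. It is repairable, and the repair collapses onto the paper's own mechanism: if $\mu=m^2\in(k^{\ast})^2$, then $x^2-\mu z^2$ is hyperbolic, so setting $y=0$ the equation becomes $(x-mz)(x+mz)=-\lambda\mu\nu$, solved by $x-mz=1$, $x+mz=-\lambda\mu\nu$; hence $U_P(k)\neq\emptyset$, the rational point splits the structure morphism, $d^{1,1}=0$ in \emph{every} case of Proposition \ref{prop:h1Pic}, and $\Br(U_P)/\Br(k)\cong H^1(k,\Pic(\bar U_P))\cong\bZ/2\bZ$ whenever $\nu\notin(k^{\ast})^2$, with no case analysis or cocycle computation. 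Your density argument (coordinates restricted to cosets of $(k^{\ast})^2$, which are infinite because $k$ is infinite, and products of infinite subsets are Zariski dense) is essentially the paper's appeal to Lemma 5.9 of \cite{uematsu14:_brauer} and is fine.
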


\begin{proof}
(1) $\Rightarrow$ (2). This is a trivial implication.\\
(2) $\Rightarrow$ (3). We prove the contrapositive statement. If we assume
 that $k$ is $2$-closed, then the equation of affine diagonal quadrics is essentially equal to 
\[
 U\colon x^2+y^2+z^2+1=0.
\]
By Proposition \ref{prop:h1Pic}, we have $H^1(k,\Pic(\bar{U}))=0$ and therefore
 $\Br(U)/\Br(k)=0$. Hence we have $\cP_k=\emptyset$.\\
(3) $\Rightarrow$ (1). Since $\dim_{\bF_2}k^{\ast}/(k^{\ast})^2\geq 1$,
 we can take a non-trivial element $v \in k^{\ast}/(k^{\ast})^2$. Now we put $\cP$ as
\begin{equation*}    
 \cP=S\times S \times S, \quad S=v(k^{\ast})^2.
\end{equation*}
Now we can easily show that for any $P \in \cP$, $U_P(k)\neq \emptyset$ and
 $H^1(k,\Pic(\bar{U}))\cong \bZ/2\bZ$, which imply that $\cP \subset
 \cP_k$. Moreover, using the infiniteness of $k$, we also check that $\cP$ is Zariski dense in $(\bG_{m,k})^3$
 by Lemma 5.9 in \cite{uematsu14:_brauer}. Thus $\cP_k$ is Zariski dense
 in $(\bG_{m,k})^3$.  
\end{proof}
Using the definition of specialization, Proposition \ref{not-2-closed}
and Theorem \ref{T-vb}, we have the following:
\begin{cor}[Corollary of Theorem \ref{T-vb}]\label{C-main}
Let $k$ be a field which is not $2$-closed, $F=k(b,c,d)$ and $U$ be the
 affine diagonal quadrics over $F$
 defined by $x^2+b y^2+c z^2+d=0.$ Then there does not exist a pair of
 an element $e \in \Br(U)$ and a dense open subset $W \subset
 (\bG_{m,k})^3$ satisfying the following conditions$:$
\begin{itemize}
 \item $\sp(e;\cdot)$ is defined on $W(k) \cap \cP_k$;
 \item for all $P \in W(k) \cap \cP_k, \sp(e;P)$ is a generator of $\Br(U_P)/\Br(k).$
\end{itemize}
\end{cor}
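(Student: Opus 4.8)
The plan is to derive Corollary \ref{C-main} from Theorem \ref{T-vb} by a standard specialization argument, reasoning by contradiction. Suppose such a pair $(e, W)$ exists, with $e \in \Br(U)$ and $W \subset (\bG_{m,k})^3$ a dense open subset on which $\sp(e;\cdot)$ is defined and always gives a generator of $\Br(U_P)/\Br(k) \cong \bZ/2\bZ$ for $P \in W(k) \cap \cP_k$. The first step is to use the construction recalled in Section \ref{S-sbg}: shrinking $W$ if necessary, pick a non-empty affine open $S \subset \bA_k^3$ (contained in $W$) together with a lift $\tilde e \in \Br(\cX \times_{\bA_k^3} S)$ such that $\cX \times_{\bA_k^3} S$ is smooth over $S$ and $\res^S_{\Spec F}(\tilde e) = e$, so that $\sp(e;P) = P^{\ast}\tilde e$ for all $P \in S(k)$.

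The key step is the observation that by Theorem \ref{T-vb} we have $\Br(U)/\Br(F) = 0$, i.e.\ $e \in \pi_F^{\ast}\Br(F)$; write $e = \pi_F^{\ast}(e_0)$ with $e_0 \in \Br(F)$. Now $F = k(\lambda,\mu,\nu)$ is the function field of $\bA_k^3$, and after possibly shrinking $S$ once more we may assume $e_0$ extends to an element $\tilde e_0 \in \Br(S)$ and that $\tilde e = \pi_S^{\ast}(\tilde e_0) + (\text{a class already trivial in } \Br(U)/\Br(F))$; more carefully, after shrinking, $\tilde e - \pi_S^{\ast}\tilde e_0$ restricts to $0$ in $\Br(U)$, and since restriction to the generic fibre is injective on a suitable further open, we may take $\tilde e = \pi_S^{\ast}\tilde e_0$ outright. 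Then for any $P \in S(k)$, functoriality of the pullback square gives
\begin{equation*}
 \sp(e;P) = P^{\ast}\tilde e = P^{\ast}\pi_S^{\ast}\tilde e_0 = \pi_0^{\ast}(P^{\ast}\tilde e_0),
\end{equation*}
which lies in $\pi_0^{\ast}\Br(k) = \Br(k)$ inside $\Br(U_P)$. Hence $\sp(e;P) = 0$ in $\Br(U_P)/\Br(k)$ for every $P \in S(k)$.

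To reach the contradiction, I invoke Proposition \ref{not-2-closed}: since $k$ is not $2$-closed, $\cP_k$ is Zariski dense in $(\bG_{m,k})^3$, hence meets the non-empty open $S \cap W$; pick $P \in S(k) \cap W(k) \cap \cP_k$. By hypothesis $\sp(e;P)$ is a generator of $\Br(U_P)/\Br(k) \cong \bZ/2\bZ$, so in particular $\sp(e;P) \neq 0$ in $\Br(U_P)/\Br(k)$, contradicting the previous paragraph. This completes the proof.

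The main obstacle, and the place requiring the most care, is the bookkeeping around the lift $\tilde e$: one must justify that after finitely many shrinkings of $S$ the class $\tilde e$ may be taken of the form $\pi_S^{\ast}(\tilde e_0)$ for some $\tilde e_0 \in \Br(S)$, using that $e = \pi_F^{\ast} e_0$ on the generic fibre and that restriction $\Br(\cX \times_{\bA^3_k} S') \to \Br(X)$ becomes injective (or at least that the discrepancy dies) after passing to a smaller $S'$. This is a routine spreading-out argument — $\Br(-)$ commutes with filtered limits of schemes with affine transition maps, and the generic-fibre identity $\tilde e|_X = \pi_F^{\ast}e_0$, together with the integral model $\tilde e_0$ of $e_0$ on some $S'$, forces $\tilde e|_{S'} = \pi_{S'}^{\ast}\tilde e_0$ after shrinking — but it is the only non-formal ingredient, everything else being functoriality of pullback and the density statement from Proposition \ref{not-2-closed}.
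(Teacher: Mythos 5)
Your proposal is correct and follows essentially the same route as the paper: write $e=\pi_F^{\ast}e_0$ using Theorem \ref{T-vb}, spread $e_0$ out to $\tilde e_0\in\Br(S)$, use Proposition \ref{not-2-closed} to find $P\in (S\cap W)(k)\cap\cP_k$, and conclude $\sp(e;P)=\pi_0^{\ast}(P^{\ast}\tilde e_0)$ dies in $\Br(U_P)/\Br(k)$. The only cosmetic difference is that the paper sidesteps your comparison of $\tilde e$ with $\pi_S^{\ast}\tilde e_0$ by simply taking $\pi_S^{\ast}\tilde e_0$ as the lift, invoking the independence of $\sp(e;P)$ from the choice of lift.
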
  
\begin{proof}
We would have an element $e$ and $W$
 satisfying the conditions stated in the above. By Theorem \ref{T-vb}, we have
\begin{equation*}
 \Br(U)/\Br(F)=0
\end{equation*}
and hence there exists an element $e' \in \Br(F)$ such that
 $\pi_F^{\ast}e'=e$. We have the isomorphism
 \begin{equation*}
  \injlim_{i} \Br(S_i) = \Br(F),
\end{equation*}
where $(S_i)$ is the projective system of the non-empty open affine
 subschemes in $\bA^3_{k}$. Put
 $\cU=\Spec\cO_F[x,y,z]/(x^2+by^2+cz^2+d))$. Then there exist a non-empty affine open
 subscheme $S \subset \bA_k^3$ and $\tilde{e'} \in \Br(S)$ such that $\tilde{e'}$ is a
 lift of $e'$ and $\cU\times_{\bA_k^3}S$ is smooth over $S$. Since $S$
 and $W$ are not empty, $S \cap W$ is also a non-empty Zariski open set
 in $(\bG_{m,k})^3$. Moreover, $\cP_k$ is a Zariski dense set in
 $(\bG_{m,k})^3$ by the non-$2$-closedness of $k$. Thus there
 exists a point $P \in (S \cap W)(k) \cap \cP_k$. Now we
 have the following commutative diagram:
\begin{equation*}
\xymatrix{
\Br(U_P) &
\ar^{P^{\ast}}[l] \Br(\cU\times_{\bA^3_k}S) \ar@{^(->}[r] & 
\Br(U) \\
\Br(k) \ar_{\pi P^{\ast}}[u] &
\ar^{P^{\ast}}[l] \Br(S) \ar_{\pi_{S}^{\ast}}[u] \ar@{^(->}[r]&
\Br(F) \ar_{\pi_{F}^{\ast}}[u], \\
}
\end{equation*}
and hence we can take $\pi_S^{\ast}\tilde{e'}$ as a lift of $e$. Then we
 get
\begin{equation*}
\sp(e;P) = P^{\ast}(\pi_S^{\ast}\tilde{e'}) =
 \pi_P^{\ast}P^{\ast}\tilde{e'} \in \pi_P^{\ast}\Br(k).
\end{equation*}
This means that $\sp(e;P)$ is zero in the group $\Br(U_P)/\Br(k)$,
 which contradicts that $\sp(e;P)$ is a generator of $\Br(U_P)/\Br(k)
 \cong \bZ/2\bZ$. 
\end{proof}
\begin{rem}
As is the case of projective diagonal cubic surfaces, we can prove that the Brauer group
 of affine diagonal quadrics of the form $x^2-y^2-cz^2+d=0$ with $cd
 \notin (k^{\ast})^2$ is
 isomorphic to $\bZ/2\bZ$, and that its uniform generator can be taken as
\[
 \left\{cd, x+y\right\}.
\] 
\end{rem}
\begin{ack} 
The author is grateful to Professor Timothy D.~Browning for suggesting the
 subject of this article.
\end{ack}

\bibliographystyle{amsalpha}
\bibliography{mathematics}
{\scriptsize
\textsc{Tetsuya Uematsu}\\
\textsc{Department of General Education, National Institute of
Technology, Toyota College}\\
\textsc{2-1 Eisei-cho Toyota Aichi 471-8525, JAPAN}\\
\textit{e-mail address}: utetsuya@08.alumni.u-tokyo.ac.jp}
\end{document}